\theoremstyle{plain}
\newtheorem{theorem}{Theorem}[section]
\newtheorem{corollary}[theorem]{Corollary}
\newtheorem{proposition}[theorem]{Proposition}
\newtheorem{question}[theorem]{Question}
\newtheorem{lemma}[theorem]{Lemma}
\newtheorem{remark}[theorem]{{Remark}}
\newtheorem{setup}[theorem]{Setup}
\newtheorem{definition}[theorem]{Definition}
\newtheorem{example}[theorem]{Example}
\providecommand{\lm}{\ensuremath{\lambda}}
\providecommand{\rar}{\rightarrow}
\providecommand{\lrar}{\longrightarrow}
\providecommand{\ov}{\overline}
\providecommand{\into}{\hookrightarrow}
\providecommand{\inc}{\subseteq}
\renewcommand\char{\text{\rm char}}
\providecommand\coker{\text{\rm coker}}
\providecommand\depth{\text{\rm depth}}
\renewcommand\dim{\text{\rm dim}}
\providecommand\gr{\text{\rm gr}}
\providecommand\hgt{\text{\rm ht}}
\renewcommand\Im{\text{\rm im}}
\providecommand\ker{\text{\rm ker}}
\providecommand\Min{\text{\rm Min}}
\providecommand\rank{\text{\rm rank}}
\providecommand{\Z}{{\mathbb Z}}
\providecommand\K{{{\bf K}_\bullet}}
\renewcommand\-{{\_\!\_}}
\providecommand{\sk}{{\ensuremath{\sf k }}}
\providecommand{\mq}{{\mathfrak q }}
\renewcommand{\mp}{{\mathfrak p }}
\providecommand{\m}{{\mathfrak m }}
\providecommand{\G}{{\mathcal G }}
\begin{document}

\title{\large Three-Standardness of the Maximal Ideal}

\author[H. Ananthnarayan]{H. Ananthnarayan}
\address{Department of Mathematics, University of Nebraska, Lincoln, NE 68588}
\email{ahariharan2@math.unl.edu}

\author[C. Huneke]{Craig Huneke}
\address{Department of Mathematics, University of Kansas, Lawrence, KS 66045}
\email{huneke@math.ku.edu}

\subjclass[2000]{Primary 13A30, 13A35, 13D40, 13H15}
\keywords{Three-standard, reduction to prime characteristic, minimal reduction}
\thanks{The second author was partially supported by the National Science Foundation, grant
DMS-0756853}

\date{\today}

\begin{abstract}
We study a notion called $n$-standardness (defined by M. E. Rossi in \cite{R} and extended in this 
paper) of ideals primary to the maximal ideal in a Cohen-Macaulay local ring and some of its 
consequences. We further study conditions under which the maximal ideal is three-standard, first 
proving results when the residue field has prime characteristic and then using the method of reduction to 
prime characteristic to extend the results to the equicharacteristic zero case. As an application, we 
extend a result due to T. Puthenpurakal (\cite{P}) and show that a certain length associated to a minimal 
reduction of the maximal ideal does not depend on the minimal reduction chosen.
\end{abstract}

\maketitle

\section{\large Introduction}\label{5.1}
Let $(R,\m)$ be a Cohen-Macaulay local ring of dimension $d$ with infinite residue field $\sk$. Let $I$ 
be an $\m$-primary ideal and $J = (x_1,\ldots,x_d)$ be a minimal reduction of $I$. In \cite{VV}, P. 
Valabrega and G. Valla show that the condition $I^n \cap J = J I^{n-1}$ holds for all $n$ if and only if the 
associated graded ring $\gr_R(I) = R/I \oplus I/I^2 \oplus \cdots$ is Cohen-Macaulay. 

In \cite{R}, Rossi studies the condition $J \cap I^k = J I^{k-1}$ for all $k \leq n$. We study this condition in 
a more generalized setup used by T. Marley in \cite[Chapter 3]{M} . The setup is as follows:

\begin{setup}{\label{S1}}
Let $(R,\m)$ be a Cohen-Macaulay local ring of dimension $d$ with infinite residue field $\sk$, $I$ an 
$\m$-primary ideal in $R$ and $\mathfrak F = \{I_n\}_{n \in \Z}$ be a collection of ideals of $R$ which is 
an admissible $I$-filtration, i.e., we have

\noindent 
a) $I_{n+1} \inc I_{n}$ for $n \geq 0$, $I_n = R$ for $n \leq 0$,

\noindent
b) $I_n I_m \inc I_{n+m}$ for every $n$, $m \geq 0$ and

\noindent
c) there is a $k \geq 0$ such that $I^n \inc I_n \inc I^{n-k}$ for every $n \geq 0$. (Note that this forces $I_n 
= R$ for $n \leq 0$). 

One can define the graded ring associated to the filtration $\mathfrak F$, $\gr_{\mathfrak F}(R) = 
\bigoplus_{n \geq 0} (I_n/I_{n+1})$. If $\mathfrak F = \{I^n\}_{n \in \Z}$, the standard I-adic filtration, then 
we denote its associated graded ring by $\gr_I(R)$.
\end{setup}

\noindent
We record a key observation of Marley in the following remark.

\begin{remark}{\rm
With notations as in Setup \ref{S1}, by Lemma 3.3 in (\cite{M}), if $J = (x_1,\ldots,x_d)$ is a minimal 
reduction of $I$, then $x_i \in I_1\setminus I_2$ for $i = 1, \ldots, d$ and $JI_n = I_{n+1}$ for $n >> 0$.
}\end{remark}

We use the same terminology as Rossi, in particular we define the following:

\begin{definition}\hfill{}\\
{\rm a)} Let $(R,\m)$ be a Cohen-Macaulay local ring of dimension $d$ with infinite residue field $\sk$, 
$I$ an $\m$-primary ideal and $J$ be a minimal reduction of $I$. We say that an admissible filtration $
\mathfrak F = \{I_n\}_{n \in \Z}$ is $n$-standard with respect to $J$ if $J \cap I_k = J I_{k-1}$ for all $k \leq 
n$.

\noindent
{\rm b)} We say is $\mathfrak F = \{I_n\}_{n \in \Z}$ is $n$-standard if $\mathfrak F$ is $n$-standard with 
respect to every minimal reduction $J$ of $I$.

\noindent
{\rm c)} We say that $I$ is $n$-standard with respect to $J$ (respectively $n$-standard) if the $I$-adic 
filtration is $n$-standard with respect to $J$ (respectively $n$-standard).
\end{definition}

\begin{remark}\label{R5.1}{\rm Let $(R,\m)$ be a Cohen-Macaulay local ring of dimension $d$ with 
infinite residue field $\sk$, $I$ an $\m$-primary ideal and $\mathfrak F = \{I_n\}_{n \in \Z}$ be an 
admissible $I$-filtration. Then\\
1) $\mathfrak F$ is one-standard and any $n$-standard filtration is $k$-standard for $1 \leq k \leq n$.

\smallskip

\noindent
2) If $\mathfrak F$ is $n$-standard with respect to $J$ and $I_{n+1} = I_nJ$, then $I_k \cap J = I_{k-1}J$ 
for all $k$. 
In particular, if in addition $\mathfrak F$ is the $I$-adic filtration, and is $n$-standard for all $n\geq 1$, 
then by a result of Valabrega and Valla \cite{VV}, $G = \gr_I(R)$ is Cohen-Macaulay. In particular, this 
proves that $I$ is $n$-standard for all $n \geq 1$ with respect to every minimal reduction $J$. 
In other words, for the $I$-adic filtration, the property of being $n$-standard for all $n\geq 1$ does not 
depend on the minimal reduction $J$.
\smallskip

\noindent
3) If $R$ is Cohen-Macaulay with an infinite residue field and $J$ is any minimal reduction of $\m$, it is 
well-known that $\m^2 \cap J = J\m$ (for example, see Proposition 8.3.3(1) in \cite{SH}). Thus the 
maximal ideal is two-standard.

\smallskip

\noindent 
4) If $R$ is Cohen-Macaulay with an infinite residue field and $J$ is any minimal reduction of an $\m$-
primary integrally closed ideal $I$, then C. Huneke (for rings containing a field), \cite[Theorem 4.7]{H1} 
and S. Itoh \cite[Theorem 1]{I}, independently proved that $I^2 \cap J = IJ$. Thus integrally closed $\m$-
primary ideals are two-standard. 
}\end{remark}

Let $(R,\m)$ denote a local Noetherian ring. We use $\lambda(\-)$ to denote the length of an $R$-
module and $\mu(\-)$ to denote the minimal number of generators of an $R$-module.  If $I$ is an $\m$-
primary ideal of $R$, we let $e_0(I)$ denote the multiplicity of $I$, and set  $e_0(R) = e_0(\m)$, the 
multiplicity of $R$. 

In \cite{P}, T. Puthenpurakal proved that $\lm(\m^3/J \m^2)$ is independent of the minimal reduction $J$ 
of $\m$ when $R$ is Cohen-Macaulay by proving the following theorem. 

\smallskip

\begin{theorem}[Puthenpurakal]\label{P}
Let $(R,\m,\sk)$ be a Cohen-Macaulay local ring of dimension $d \geq 1$ with infinite residue field $\sk
$. If $J$ is a minimal reduction of $\m$, then\\
\centerline{ $\lm(\m^3/J\m^2) = e_0(R) + (d-1)\mu(\m) - \mu(\m^2) - \binom{d-1}{2}.$}  
\end{theorem}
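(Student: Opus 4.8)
The plan is to reduce the computation to finding $\mu(J\m)$, the minimal number of generators of the ideal $J\m$, and then to evaluate that number using the fact that $J$ is generated by a regular sequence. Write $J=(x_1,\ldots,x_d)$, where $x_1,\ldots,x_d$ is a system of parameters and hence (as $R$ is Cohen--Macaulay) a regular sequence. Since $J$ is a minimal reduction of $\m$ generated by a regular sequence, one has $\lm(R/J)=e_0(J)=e_0(\m)=e_0(R)$; I would record this standard fact at the outset.

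Next I would split the length along two filtrations. Using $J\m^2\inc\m^3$ we get $\lm(\m^3/J\m^2)=\lm(R/J\m^2)-\lm(R/\m^3)$, where $\lm(R/\m^3)=1+\mu(\m)+\mu(\m^2)$. On the other hand, from $J\m^2\inc J\m\inc J$,
\[ \lm(R/J\m^2)=\lm(R/J)+\lm(J/J\m)+\lm(J\m/J\m^2)=e_0(R)+d+\mu(J\m), \]
since $\lm(J/J\m)=\mu(J)=d$ and $\lm(J\m/J\m^2)=\lm(J\m/\m(J\m))=\mu(J\m)$, using $J\m^2=\m(J\m)$. Combining gives $\lm(\m^3/J\m^2)=e_0(R)+d+\mu(J\m)-1-\mu(\m)-\mu(\m^2)$, so the theorem becomes equivalent to the identity $\mu(J\m)=d\,\mu(\m)-\binom{d}{2}$, the stated formula then following from $\binom{d}{2}=\binom{d-1}{2}+(d-1)$.

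The heart of the argument, and the step I expect to be the main obstacle, is this evaluation of $\mu(J\m)$. I would consider the multiplication map $\bar\Phi\colon(\m/\m^2)^{\oplus d}\to J\m/\m^2J$ sending $(\bar y_1,\ldots,\bar y_d)$ to $\sum_i x_iy_i$; it is well defined because the lift $\Phi$ carries $(\m^2)^{\oplus d}$ into $\m^2J$, and it is surjective onto $J\m/\m^2J=J\m/\m(J\m)$, so $\mu(J\m)=d\,\mu(\m)-\dim_{\sk}\ker\bar\Phi$. The Koszul relations $\kappa_{ij}$ (with $x_j$ in slot $i$ and $-x_i$ in slot $j$) lie in $\ker\bar\Phi$, and I claim they span it: if $\sum_i x_iy_i\in\m^2J$, write $\sum_i x_iy_i=\sum_i x_iw_i$ with $w_i\in\m^2$, so $\sum_i x_i(y_i-w_i)=0$; as $x_1,\ldots,x_d$ is a regular sequence, this syzygy is an $R$-combination of the $\kappa_{ij}$, and reducing modulo $\m^2$ (where the $w_i$ vanish) exhibits $(\bar y_i)$ as a $\sk$-combination of the $\bar\kappa_{ij}$.

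It then remains to count $\dim_{\sk}\ker\bar\Phi=\binom{d}{2}$, which amounts to the $\sk$-linear independence of the $\binom{d}{2}$ classes $\bar\kappa_{ij}$ ($i<j$) in $(\m/\m^2)^{\oplus d}$, and this in turn reduces to the linear independence of $\bar x_1,\ldots,\bar x_d$ in $\m/\m^2$. Here I would invoke the two-standardness of $\m$ recorded in Remark~\ref{R5.1}(3): a nontrivial relation $\sum c_i\bar x_i=0$ with $c_i\in\sk$ would place a minimal generator of $J$ into $\m^2\cap J=\m J$, forcing $\mu(J)<d$ by Nakayama, a contradiction. Thus the minimal generators of $J$ form part of a minimal basis of $\m$, the kernel has dimension exactly $\binom{d}{2}$, and the identity $\mu(J\m)=d\,\mu(\m)-\binom{d}{2}$, hence the theorem, follows.
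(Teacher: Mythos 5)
Your proof is correct, and all the individual steps check out (the length chase through $R \supseteq J \supseteq J\m \supseteq J\m^2$, the reduction to $\mu(J\m) = d\,\mu(\m) - \binom{d}{2}$, and the final binomial arithmetic are all right). It is worth comparing it with how the paper obtains Theorem \ref{P}: the paper never proves the statement in isolation, but derives it as the case $I = \m$, $k = n = 2$ of the general machinery for $n$-standard admissible filtrations --- two-standardness of $\m$ (Remark \ref{R5.1}(3), the same fact you invoke) feeds through Proposition \ref{P5.2} into the Koszul-homology vanishing used in Proposition \ref{P5.0}, and then Proposition \ref{P5.3} and the induction of Theorem \ref{T5.1} produce the formula. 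Your key identity $\mu(J\m) = d\,\mu(\m) - \binom{d}{2}$ is exactly Proposition \ref{P5.3} with $k=2$, which reads $\lm(J\m/J\m^2) = d\,\lm(\m/\m^2) - \binom{d}{2}\lm(R/\m)$, and your description of $\ker\bar\Phi$ --- lift a relation, subtract correction terms $w_i \in \m^2$, use that all syzygies of a regular sequence are Koszul, then reduce modulo $\m^2$ --- is essentially the paper's proof there that $\ker(\psi) = \Im(\phi)$. Where you genuinely diverge is in the counting step: you prove $\dim_\sk \ker\bar\Phi = \binom{d}{2}$ by an explicit linear-independence argument (independence of $\bar x_1,\ldots,\bar x_d$ in $\m/\m^2$ via $\m^2 \cap J = \m J$, Nakayama, and Krull's height theorem), whereas the paper obtains the corresponding injectivity of $\phi$ homologically, from the degree-shifting Lemma \ref{L5.1} applied to the vanishing of $H_1$; and your single telescoping of lengths replaces the paper's induction on $n$ with Pascal's identity. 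What your route buys is a short, self-contained, entirely elementary proof, with no associated graded ring and no Koszul homology formalism; what the paper's packaging buys is generality --- the same apparatus proves Theorem \ref{T5.1} for every $n$-standard admissible filtration, and hence also Theorem \ref{C5.2} and the invariance statements for $\lm(\m^4/J\m^3)$, none of which are accessible to the specialized argument.
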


In Section \ref{6.1}, we extend this result in several ways to $n$-standard admissible $I$-filtrations. We 
go over the properties of Koszul complexes and homology needed for this purpose in Section \ref{5.2}. 
We first prove the equivalence of $n$-standardness to the vanishing of a certain Koszul homology 
module up to a certain degree in Proposition \ref{P5.2} and then use these to prove Theorem \ref{T5.1}. Combining Remark \ref{R5.1}(4) with Theorem \ref{T5.1}, we obtain the following extension of Puthenpurakal's Theorem (Theorem \ref{P}) as an immediate corollary.

\begin{theorem}\label{C5.2}
Let $(R,\m,\sk)$ be a Cohen-Macaulay local ring of dimension $d \geq 1$ with infinite residue field, $I$ an integrally closed $\m$-primary ideal and $J$ a minimal reduction of $I$. Then $\lm(I^3/JI^2) = e_0(I) - \lm(I^2/I^3) + (d-1)\lm(I/I^2) -\binom{d-1}{2}\lm(R/I)$. In particular, $\lm(I^3/JI^2)$ is independent of the minimal reduction $J$ chosen.
\end{theorem}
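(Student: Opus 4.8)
The plan is to derive Theorem \ref{C5.2} as a direct consequence of the length formula of Theorem \ref{T5.1} together with the two-standardness supplied by Remark \ref{R5.1}(4). First I would observe that, since $I$ is integrally closed and $\m$-primary, Remark \ref{R5.1}(4) (the Huneke--Itoh theorem) gives $I^2 \cap J = IJ$ for \emph{every} minimal reduction $J$ of $I$; that is, $I$ is two-standard with respect to every $J$. This is exactly the two-standardness hypothesis under which Theorem \ref{T5.1} yields the closed form for $\lm(I^3/JI^2)$, so the displayed formula follows at once. Finally, every term on the right-hand side---$e_0(I)$, $\lm(I^2/I^3)$, $\lm(I/I^2)$ and $\lm(R/I)$---is intrinsic to $R$ and $I$ and does not involve $J$ (the multiplicity $e_0(I)$ being independent of the reduction chosen), so $\lm(I^3/JI^2)$ is the same for all minimal reductions $J$. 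Specializing to $I = \m$, where $\lm(R/\m) = 1$, $\lm(\m/\m^2) = \mu(\m)$ and $\lm(\m^2/\m^3) = \mu(\m^2)$, recovers Theorem \ref{P}.

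Since the corollary is immediate once Theorem \ref{T5.1} is in hand, let me indicate how I would establish the formula itself, which is where the real work lies. Writing $J = (x_1,\ldots,x_d)$, the elements $x_1,\ldots,x_d$ form a regular sequence because $R$ is Cohen--Macaulay and $J$ is a parameter ideal, and $e(\underline{x};R) = e_0(J) = e_0(I)$ since $J$ is a reduction of $I$. I would first reduce to a single length via the additivity $\lm(I^3/JI^2) = \lm(I^2/JI^2) - \lm(I^2/I^3)$, valid because $JI^2 \inc I^3 \inc I^2$. To compute $\lm(I^2/JI^2) = \lm(I^2 \otimes_R R/J)$ I would resolve $R/J$ by the Koszul complex $\K(\underline{x};R)$; the multiplicity form of the Koszul Euler characteristic gives $\sum_{i\geq 0}(-1)^i\lm(H_i(\underline{x};I^2)) = e(\underline{x};I^2) = e_0(I)$ (as $I^2$ is rank one and coincides with $R$ at every minimal prime), whence $\lm(I^2/JI^2) = e_0(I) + \sum_{i\geq 1}(-1)^{i+1}\lm(H_i(\underline{x};I^2))$.

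The crux is then to evaluate the higher Koszul homology. From the short exact sequences $0 \to I^2 \to R \to R/I^2 \to 0$ and $0 \to I/I^2 \to R/I^2 \to R/I \to 0$, together with $H_i(\underline{x};R)=0$ for $i \geq 1$, one gets $H_i(\underline{x};I^2) \cong H_{i+1}(\underline{x};R/I^2)$ for $i \geq 1$. Since each $x_j$ lies in $I$, it annihilates $R/I$ and $I/I^2$, so $H_i(\underline{x};R/I) \cong (R/I)^{\binom{d}{i}}$ and $H_i(\underline{x};I/I^2) \cong (I/I^2)^{\binom{d}{i}}$; these are the source of the binomial coefficients in the formula. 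Two-standardness enters precisely in the degree-zero contribution, through the surjection $R/J \to R/(I^2+J)$ in the long exact sequence, whose kernel is $I^2/(I^2 \cap J) = I^2/IJ$. I expect the main obstacle to be the bookkeeping that collapses the resulting alternating sum of the modules $(R/I)^{\binom{d}{i}}$ and $(I/I^2)^{\binom{d}{i}}$ to exactly $(d-1)\lm(I/I^2) - \binom{d-1}{2}\lm(R/I)$; this is the homological heart of Theorem \ref{T5.1}, and it is the Koszul-homology vanishing packaged in Proposition \ref{P5.2}---equivalent to two-standardness---that makes the collapse go through. Once this identity is proved, Theorem \ref{C5.2} and its independence of $J$ are immediate as described above.
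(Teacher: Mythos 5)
Your proof of the stated theorem is correct and is exactly the paper's proof: the paper obtains Theorem~\ref{C5.2} by combining Remark~\ref{R5.1}(4) (the Huneke--Itoh theorem, giving $I^2 \cap J = IJ$ for every minimal reduction $J$) with Theorem~\ref{T5.1} applied to the $I$-adic filtration with $n = k = 2$, and your first paragraph does precisely this, with the independence of $J$ following because the right-hand side depends only on $R$ and $I$. Your supplementary sketch of Theorem~\ref{T5.1} itself, however, is not the paper's argument: the paper computes $\lm(JI_{k-1}/JI_k)$ via the truncated degree-$k$ strands of the Koszul complex over the associated graded ring $\G$ (Propositions~\ref{P5.0}, \ref{P5.2}, \ref{P5.3}) and then inducts using Pascal's identity, whereas you work over $R$ with module coefficients, using Serre's theorem $\chi(\underline{x};I^2) = e_0(I)$ and the isomorphisms $H_i(\underline{x};I^2) \cong H_{i+1}(\underline{x};R/I^2)$. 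The two are closely related---your connecting maps $(R/I)^{\binom{d}{j}} \to (I/I^2)^{\binom{d}{j-1}}$ are exactly the graded strands the paper analyzes---but be aware that the collapse you defer as ``bookkeeping'' needs more than the degree-zero input $I^2 \cap (J) = IJ$: one must kill the higher homology of those strands, which requires the vanishing $H_1(x_1',\ldots,x_k';\G)_{\leq 1} = 0$ of Proposition~\ref{P5.2} promoted by Lemma~\ref{L5.1}, i.e., the content of Proposition~\ref{P5.3}. Since Theorem~\ref{T5.1} is a standing result of the paper and may simply be cited, this incompleteness does not affect the correctness of your proof of the corollary.
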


When $(R,\m)$ is a Cohen-Macaulay local ring with infinite residue field $\sk$ and $J$ is a minimal 
reduction of $\m$, since $\m^2 \cap J = J\m$, $\m$ is three-standard if and only if $\m^3 \cap J = J\m^2$. 
In Section \ref{6.2}, we investigate conditions under which the equality $J \cap \m^3 = J\m^2$ holds for 
every minimal reduction $J$ of $\m$.

In Theorem \ref{C6.1}, we show that $\m$ is three-standard when $\char(\sk) = p > 0$ and the graded 
ring $\G$ associated to the maximal ideal $\m$ is reduced and connected in codimension one. In Remark \ref{R6}, we give an alternate proof of three-standardness of $\m$, assuming $\sk$ is perfect 
and $\G$ is a normal domain. In order 
to prove this, we borrow some tools like tight closure (e.g., see \cite{H2}) and graded absolute integral 
closure (e.g., see \cite[Section 5]{HH1}) from the world of positive characteristic. 

We then use the method of reduction to prime characteristic (e.g., see \cite{HH3}, sections 2.1 and 2.3) to 
prove in general the following theorem:

\begin{theorem}
Let $(R,\m,\sk)$ be a Cohen-Macaulay local ring of dimension $d \geq 1$ with infinite residue field $\sk
$. Assume either that the characteristic of $\sk$ is positive and the associated graded ring $\G = \gr_\m(R)$ is reduced and connected in codimension one, or that $R$ is equicharacteristic zero and $\G$ is an 
absolute domain. Then $\m$ is three-standard.
\end{theorem}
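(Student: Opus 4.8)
The statement has two cases, and the positive-characteristic case is exactly Theorem \ref{C6.1}. So the plan is to assume $R$ is equicharacteristic zero with $\G=\gr_\m(R)$ an absolute domain and to deduce three-standardness by reduction to prime characteristic, feeding the fibers into Theorem \ref{C6.1}. First I would fix a minimal reduction $J=(x_1,\dots,x_d)$ of $\m$ and, following the descent machinery of \cite{HH3} (Sections 2.1 and 2.3), spread the data $(R,\m,J,\G)$ out over a finitely generated $\Z$-subalgebra $A$ (a domain) of a field of characteristic zero: this produces a finitely generated $A$-algebra $R_A$ whose generic fiber $R_A\otimes_A\kappa$ recovers $R$, where $\kappa=\mathrm{Frac}(A)$, together with a graded model $\G_A$ of $\G$. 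Shrinking $\operatorname{Spec} A$ to a dense open set, I may assume that for every maximal ideal $\mu\subset A$ the fiber $R_\mu:=R_A\otimes_A(A/\mu)$ is a Cohen-Macaulay local ring of dimension $d$ and positive characteristic $p=\char(A/\mu)$, that $J$ descends to a reduction $J_\mu$ of $\m_\mu$ generated by $d$ elements (hence a minimal reduction), and that the associated graded ring descends, $\gr_{\m_\mu}(R_\mu)\cong\G_A\otimes_A(A/\mu)=:\G_\mu$.

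The decisive step is to transport the hypothesis. Since $\G$ is an absolute (i.e.\ geometrically integral) domain, the standard spreading-out of geometric integrality over the base lets me shrink $\operatorname{Spec} A$ further so that $\G_\mu$ is geometrically integral, and in particular a domain, for every $\mu$ in a dense open set. A domain has $0$ as its unique minimal prime, so $\operatorname{Spec}\G_\mu$ is irreducible; thus $\G_\mu$ is reduced and, being irreducible, connected in codimension one, and these properties persist after the faithfully flat extension to $R_\mu(t)=R_\mu[t]_{\m_\mu R_\mu[t]}$ needed to make the residue field infinite. Each such fiber therefore satisfies the hypotheses of Theorem \ref{C6.1}, so $\m_\mu$ is three-standard; by faithful flatness this descends to $\m_\mu^3\cap J_\mu=J_\mu\m_\mu^2$ for all $\mu$ in the dense open set.

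It remains to lift this back to characteristic zero. Rather than intersect ideals directly, an operation that does not commute with base change, I would use the criterion of Proposition \ref{P5.2}, which identifies three-standardness with the vanishing, up to a fixed degree, of a Koszul homology module $H$ built from $\G$ and $J$. Spreading $H$ out gives a finitely generated module $H_A$; after inverting one more element of $A$, generic freeness guarantees that the formation of $H$ commutes with the base changes $A\to A/\mu$ and $A\to\kappa$. By the previous paragraph $H_A\otimes_A(A/\mu)=0$ for all $\mu$ in a dense set of closed points, so $\operatorname{Supp}(H_A)$ is a proper closed subset of $\operatorname{Spec} A$, which therefore omits the generic point; hence $H=H_A\otimes_A\kappa=0$. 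By Proposition \ref{P5.2} this gives three-standardness of $\m$ with respect to $J$. Since $J$ was an arbitrary minimal reduction and the same argument applies to each (every such $J$ descending to a minimal reduction in almost all fibers), $\m$ is three-standard with respect to every minimal reduction, as required.

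The main obstacle is the simultaneous bookkeeping of the descent: I must arrange, on a single dense open subset of $\operatorname{Spec} A$, that (i) the fibers $R_\mu$ remain Cohen-Macaulay of dimension $d$, (ii) both the associated graded ring and the Koszul homology of Proposition \ref{P5.2} commute with reduction modulo $\mu$, which requires generic flatness of the relevant graded pieces over $A$, and (iii) the chosen minimal reduction descends to a minimal reduction in each fiber. Each item is a routine instance of generic flatness or spreading out, but keeping them compatible is the technical heart of the argument. The genuinely indispensable hypothesis is geometric integrality of $\G$: without it, reduction modulo $p$ could render $\G_\mu$ reducible or nonreduced, and Theorem \ref{C6.1} would no longer apply.
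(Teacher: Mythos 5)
Your positive-characteristic case is fine (it is exactly Theorem \ref{C6.1}), but the equicharacteristic-zero argument has a genuine gap at its very first step: you cannot ``spread out'' the local ring $R$ itself. The descent machinery of Sections 2.1 and 2.3 of \cite{HH3} applies to finitely generated algebras over a field (or over a Noetherian domain $A$), and a Cohen-Macaulay local ring of equicharacteristic zero is in general not of this form: for $R = \mathbb{C}[[x,y]]$, say, there is no finitely generated $\Z$-subalgebra $A$ and finitely generated $A$-algebra $R_A$ with $R_A \otimes_A \mathrm{Frac}(A) \cong R$, since $R$ is not a finitely generated algebra over any field. Descent for arbitrary (complete) local rings requires much heavier Artin-approximation technology from later parts of \cite{HH3}, not the sections you cite; in particular, the items you label as ``routine bookkeeping'' --- fibers $R_\mu$ being Cohen-Macaulay local of dimension $d$, $J$ descending to a minimal reduction, $\gr$ commuting with the base change --- have no meaning until $R_A$ exists, and it does not.

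The paper's proof is structured precisely to avoid this. It first converts three-standardness of $\m$ with respect to $J = (x_1,\ldots,x_d)$ into a statement purely about the standard graded $\sk$-algebra $\G = \gr_\m(R)$: by Proposition \ref{P5.2} and Corollary \ref{C5.1}, $\m$ is three-standard with respect to $J$ if and only if $H_1(x'_1,\ldots,x'_k;\G)_2 = 0$ for $1 \leq k \leq d$, where the $x'_i$ are the leading forms of the $x_i$ and form a linear system of parameters in $\G$. Unlike $R$, the ring $\G$ \emph{is} a finitely generated $\sk$-algebra, so Sections 2.1 and 2.3 of \cite{HH3} genuinely apply to it: using the colon-ideal reformulation of Proposition \ref{P5.1} (Setup \ref{S6.1}), the paper descends $\G$ alone to an absolute domain $G'$ over a field of prime characteristic (Theorem \ref{preduction}), and there invokes the purely graded Lemma \ref{T6.1} --- not the local Theorem \ref{C6.1} --- to conclude (Proposition \ref{T6.3} and Theorem \ref{C6.3}). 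Your final paragraph, lifting the Koszul-homology vanishing back from the closed fibers to the generic fiber via Proposition \ref{P5.2} and generic freeness, is the right idea, but the whole argument must be run at the level of $\G$; the detour through local fibers $R_\mu$ is not available. The lesson your proposal misses is exactly the structural point of the paper: the obstruction to three-standardness lives entirely in the associated graded ring, and only the graded ring can be reduced to characteristic $p$ with the cited tools.
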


As an immediate corollary of our work on three-standardness, we extend Puthenpurakal's result in a 
different direction in Theorem \ref{invariancep} and Theorem \ref{invariance0}, which are summarized in 
Theorem~\ref{invariance} below:

\begin{theorem}\label{invariance}
Let $(R,\m,\sk)$ be a Cohen-Macaulay local ring of dimension $d \geq 1$ with infinite residue field $\sk
$. Assume either that the characteristic of $\sk$ is positive and the associated graded ring $\G = \gr_\m(R)$ is 
reduced and connected in codimension one, or that $R$ is equicharacteristic zero and $\G$ is an 
absolute domain. If $J$ is a minimal reduction of $\m$, then
$$\lm(\m^4/J\m^3) =   e_0(R) - \mu(\m^{3}) + (d-1)\mu(\m^{2}) - \binom{d-1}{2}\mu(\m) + \binom{d-1}{3}.$$
Consequently, $\lm(\m^4/J\m^3)$ is independent of the reduction $J$.
\end{theorem}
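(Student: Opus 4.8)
The plan is to obtain Theorem~\ref{invariance} as the degree-three instance of the general length formula for $n$-standard filtrations (Theorem~\ref{T5.1}), with the three-standardness supplied by the immediately preceding theorem serving as the exact hypothesis that formula needs. First I would record that, under either set of hypotheses, that theorem guarantees $\m$ is three-standard: $\m^k\cap J=J\m^{k-1}$ for all $k\le 3$ and every minimal reduction $J$. By Proposition~\ref{P5.2} this is equivalent to the vanishing of the first Koszul homology $H_1(x_1^*,\ldots,x_d^*;\G)$ in the relevant low degrees, where $x_1^*,\ldots,x_d^*\in\G_1=\m/\m^2$ are the initial forms of a minimal generating set of $J$ in $\G=\gr_\m(R)$; this vanishing is precisely the input that drives Theorem~\ref{T5.1}.

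Next I would apply Theorem~\ref{T5.1} to the $\m$-adic filtration with $n=3$. The mechanism behind that theorem is an Euler-characteristic bookkeeping on the Koszul complex of $x_1,\ldots,x_d$, in which the graded free module $[K_i]_m$ contributes length $\binom{d}{i}\lm(\m^{m-i}/\m^{m-i+1})$ and the vanishing of $H_1$ in low degrees (Proposition~\ref{P5.2}) forces the homological corrections into a closed binomial-coefficient expression. Modeling the output on Puthenpurakal's formula (Theorem~\ref{P}) and on Theorem~\ref{C5.2} --- both one degree lower --- I expect the intermediate form
\[
\lm(\m^4/J\m^3)=e_0(R)-\sum_{i=0}^{3}(-1)^i\binom{d-1}{i}\,\lm\!\left(\m^{3-i}/\m^{4-i}\right),
\]
using $e_0(R)=e_0(\m)=\lm(R/J)$ for a minimal reduction $J$ of $\m$.

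It then remains only to rewrite the right-hand side in the stated form. For $j\ge 1$ one has $\mu(\m^j)=\dim_\sk(\m^j/\m^{j+1})=\lm(\m^j/\m^{j+1})$, while the $i=3$ term carries $\lm(R/\m)=1$; expanding the sum therefore gives $\mu(\m^3)-(d-1)\mu(\m^2)+\binom{d-1}{2}\mu(\m)-\binom{d-1}{3}$, and substituting back yields
\[
\lm(\m^4/J\m^3)=e_0(R)-\mu(\m^{3})+(d-1)\mu(\m^{2})-\binom{d-1}{2}\mu(\m)+\binom{d-1}{3},
\]
which is the assertion. Since every term on the right --- the multiplicity $e_0(R)$, the numbers $\mu(\m^j)$, and the binomial coefficients --- is an invariant of $R$ alone, the independence of $\lm(\m^4/J\m^3)$ from the chosen minimal reduction $J$ is immediate.

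The real difficulty lives not in this corollary but in its two ingredients, both assumed here. The first is Theorem~\ref{T5.1} itself, whose delicate point is that only three-standardness --- and not the a priori stronger four-standardness that the literal equality $\m^4\cap J=J\m^3$ would require --- is enough to determine $\lm(\m^4/J\m^3)$; making this precise amounts to showing that the higher Koszul homologies do not disturb the count, exactly as two-standardness already suffices for Puthenpurakal's $\lm(\m^3/J\m^2)$. The second, and the genuine obstacle if one sets out to prove Theorem~\ref{invariance} from scratch, is establishing the three-standardness of $\m$: in positive characteristic this rests on tight-closure and absolute-integral-closure arguments under the hypothesis that $\G$ is reduced and connected in codimension one, and the equicharacteristic-zero case is then recovered by reduction to prime characteristic. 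Granting those inputs, the formula and its reduction-independence follow by the direct computation above.
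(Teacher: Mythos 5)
Your proposal is correct and follows essentially the same route as the paper: the paper proves Theorem~\ref{invariance} by combining its three-standardness results (Theorems~\ref{C6.1} and~\ref{C6.3}) with Remark~\ref{invremark}, which is exactly your step of applying Theorem~\ref{T5.1} with $n=3$ to the $\m$-adic filtration. Your sign bookkeeping and the final conversion $\lm(\m^j/\m^{j+1})=\mu(\m^j)$, $\lm(R/\m)=1$ (left implicit in the paper between Remark~\ref{invremark} and the stated formula) are both accurate.
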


\section{Preliminaries}\label{5.2}

\centerline{\bf Koszul Homology}
\vskip 5 pt

Let $G = \oplus_{i\geq 0} G_i$ be a graded ring with $x_1,\ldots,x_d \in G_1$. Let $\K(x_1,\ldots,x_k;G)$ 
be the Koszul complex on $x_1,\ldots,x_k$ over $G$. Then $\K(x_1,\ldots,x_k;G)$ is:
$$0 \rar G[-k] \rar G[-k+1]^{\oplus d} \rar \cdots \rar G[-2]^{\oplus \binom{d}{2}} \rar G[-1]^{\oplus d} 
\overset{(x_1,\ldots,x_k)}\lrar G\rar 0$$

\begin{remark}\label{R5.2}{\rm \hfill{}\\
1. There is a short exact sequence of complexes $$0\lrar \K(x_1,\ldots,x_{k-1};G) \lrar \K
(x_1,\ldots,x_k;G) \lrar \K(x_1,\ldots,x_{k-1};G)[-1]\lrar 0.$$

\noindent
2. Let $H_i(\-)$ be the $i$th homology in the Koszul complex. The above short exact sequence of Koszul 
complexes gives a long exact sequence on the Koszul homologies:
$$H_i(x_1,\ldots,x_{k-1})[-1] \overset{\cdot x_k}\rar H_i(x_1,\ldots,x_{k-1}) \rar H_i(x_1,\ldots,x_k) \rar H_
{i-1}(x_1,\ldots,x_{k-1})[-1] \overset{\cdot x_k}\rar $$

which breaks up into a long exact sequence of graded pieces:
$$H_i(x_1,\ldots,x_{k-1})_{j-1} \overset{\cdot x_{k}}\lrar H_i(x_1,\ldots,x_{k-1})_j \rar H_i(x_1,\ldots,x_k)_j 
\rar H_{i-1}(x_1,\ldots,x_{k-1})_{j-1}\rar...$$

\noindent
3. Notice that the $i$th Koszul homology $H_i(x_1,\ldots,x_k;G)$ is a subquotient of $G[-i]^{\oplus 
\binom{k}{i}}$. Thus if the image of $(r_1,\ldots,r_{\binom{k}{i}})$ is in $H_i(x_1,\ldots,x_k;G)_j$, then 
without loss of generality $\deg(r_l) = j-i$ as an element of $G$. \\

\noindent
4. By (3), we see that $H_i(\-;G)_j = 0$ for $j < i$. Clearly, we also have $H_i(x_1,\ldots,x_k;G) = 0$ for $i 
> k$. \\

\noindent
5. The element $\ov{(r_1,\ldots,r_k)}$ is zero in $H_1(x_1,\ldots,x_k;G)$ if it can be written as a linear 
combination of the Koszul relations, i.e.,  as elements in $G^{\oplus k}$, 

\[\left( \begin{array}{c}r_1 \\ \vdots \\ r_i \\ \vdots \\ r_j \\ \vdots \\ r_k \end{array} \right) = 
\sum_{1 \leq i < j \leq k} s_{ij} \left( \begin{array}{c}0 \\ \vdots \\ x_j \\ \vdots \\ -x_i \\ \vdots \\ 0 \end{array}
\right) \text{ where } s_{ij} \in G. \] Rearranging, we see that this happens if and only if
$(r_1,\ldots,r_k) = (x_1,\ldots,x_k) S$, where $S$ is the skew-symmetric matrix
\[\left( \begin{array}{cccc}0& - s_{12}& \cdots & -s_{1k}\\ s_{12}& 0 & \cdots & - s_{2k} \\ \vdots & & \ddots 
& \vdots \\
s_{1k} & s_{2k} & \cdots & 0 \end{array}\right).\]
}\end{remark}

\begin{lemma}\label{L5.1}
With notations as in the above remark, if $H_i(x_1,\ldots,x_k;G)_j = 0$ for all $k \leq n$, then $H_{i+1}
(x_1,\ldots,x_k;G)_{j +1} = 0$ for all $k \leq n$.
\end{lemma}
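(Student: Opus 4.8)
The plan is to fix the homological degree $i$ and induct on the number $k$ of elements $x_1,\ldots,x_k$, feeding the vanishing hypothesis into the long exact sequence of Remark \ref{R5.2}(2). The organizing idea is that in that sequence the term $H_{i+1}(x_1,\ldots,x_k;G)_{j+1}$ is flanked by a term $H_{i+1}(x_1,\ldots,x_{k-1};G)_{j+1}$, which the induction on $k$ will make vanish, and by a term $H_i(x_1,\ldots,x_{k-1};G)_j$, which the standing hypothesis makes vanish; exactness then squeezes the middle term to zero.

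For the base case $k=1$ I would invoke Remark \ref{R5.2}(4): as $i+1 \geq 2 > 1$, the Koszul complex $\K(x_1;G)$ carries no homology in homological degree $i+1$, so $H_{i+1}(x_1;G)=0$ and in particular its degree-$(j+1)$ component is zero. For the inductive step I fix $k$ with $2 \leq k \leq n$, assume the result for $k-1$, and extract from the long exact sequence the three-term exact segment
\[ H_{i+1}(x_1,\ldots,x_{k-1};G)_{j+1} \rar H_{i+1}(x_1,\ldots,x_k;G)_{j+1} \rar H_i(x_1,\ldots,x_{k-1};G)_j. \]
Here the left term vanishes by the inductive hypothesis and the right term vanishes by the hypothesis of the lemma applied to the index $k-1 \leq n$; since the middle term sits between two zero objects, exactness forces $H_{i+1}(x_1,\ldots,x_k;G)_{j+1}=0$, which completes the step.

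I expect the only delicate point to be the index bookkeeping in passing from the ungraded sequence of Remark \ref{R5.2}(2) to the graded segment above: one must carry the single shift $[-1]$ attached to the connecting homomorphism so that $H_i(x_1,\ldots,x_{k-1};G)[-1]$ evaluated in degree $j+1$ becomes $H_i(x_1,\ldots,x_{k-1};G)_j$. This is precisely what produces the simultaneous $i\mapsto i+1$, $j\mapsto j+1$ shift recorded in the statement, and it is what guarantees that the hypothesis is consumed in the correct internal degree $j$ rather than $j+1$, and at the truncated sequence $x_1,\ldots,x_{k-1}$ rather than $x_1,\ldots,x_k$. Once this is set up correctly there is no further obstacle, since both flanking terms are furnished for free, one by the induction and one by the hypothesis.
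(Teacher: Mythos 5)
Your proof is correct and follows essentially the same route as the paper's: induction on $k$, extracting from the graded long exact sequence of Remark \ref{R5.2}(2) the segment in which $H_{i+1}(x_1,\ldots,x_k;G)_{j+1}$ is flanked by $H_{i+1}(x_1,\ldots,x_{k-1};G)_{j+1}$ (killed by induction) and $H_i(x_1,\ldots,x_{k-1};G)_j$ (killed by the hypothesis). The only point worth noting is that your base case invokes $i+1\geq 2$, i.e.\ $i\geq 1$; this is harmless, since that is the only regime in which the lemma holds and is used in the paper (all its applications start from vanishing of $H_1$), the case $i=0$ being genuinely false as stated (e.g.\ $G=\sk[x]/(x^{j+1})$, $x_1=x$).
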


\begin{proof}
Notice that $H_i(x_1,\ldots,x_k;G)_j = 0$ by the hypothesis and $H_{i+1}(x_1,\ldots,x_{k-1};G)_{j+1} = 0$ 
by induction on $k$. We see from the long exact sequence of the
Koszul homologies that $H_i(x_1,\ldots,x_k;G)_j = 0$ and $H_{i+1}(x_1,\ldots,x_{k-1};G)_{j +1} = 0$ 
forces $H_{i+1}(x_1,\ldots,x_k;G)_{j +1} = 0$. 
\end{proof}

\begin{proposition}\label{P5.0}
Let $G_0$ be an Artinian local ring and $G = \oplus_{i\geq 0} G_i$ be a graded $G_0$-algebra with 
$x_1,\ldots,x_d \in G_1$. Let ($\ast_k$) be the complex $$0 \rar (G_0)^{\oplus \binom{d}{k}} \rar (G_1)^
{\oplus \binom{d}{k-1}} \rar \cdots \rar (G_{k-1})^{\oplus d} \rar 0 $$ obtained by truncating the $k$-th 
degree string
of the Koszul complex $\K(x_1,\ldots,x_d; G)$ for some $k \geq 2$. If $H_1(x_1,\ldots,x_m;G)_{<n} = 0$ 
for $1 \leq m \leq d$,
then $$\lm(H_0(\ast_k)) = \sum_{i=1}^k (-1)^{i-1}\binom{d}{i}\lm(G_{k-i})\quad \text{ for } 1 \leq k \leq n,$$ 
where $H_i(\ast_k)$ is the $i$-th homology of the complex ($\ast_k$). 
\end{proposition}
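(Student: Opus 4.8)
The plan is to recognize $(\ast_k)$ as the degree-$k$ strand of the Koszul complex $\K(x_1,\ldots,x_d;G)$ with its rightmost entry deleted, and then to read off $\lm(H_0(\ast_k))$ as an Euler characteristic, once I know that the higher homology of $(\ast_k)$ vanishes. Writing $F_i = G[-i]^{\oplus\binom{d}{i}}$ for the $i$th free module of $\K(x_1,\ldots,x_d;G)$, its degree-$k$ graded piece is $(F_i)_k = (G_{k-i})^{\oplus\binom{d}{i}}$. Thus $(\ast_k)$ is exactly the sequence of these pieces for $i=1,\ldots,k$, with the entry $(F_0)_k=G_k$ in homological degree $0$ removed (the term $(F_{k+1})_k$ already vanishes). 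Placing the rightmost surviving term $(F_1)_k=(G_{k-1})^{\oplus d}$ in homological degree $0$, the truncation affects only that spot, so for $1\le i\le k-1$ the homology $H_i(\ast_k)$ coincides with the Koszul homology $H_{i+1}(x_1,\ldots,x_d;G)_k$, while $H_0(\ast_k)$ is the cokernel of $(F_2)_k\to(F_1)_k$. Since $G_0$ is Artinian and each $G_j$ has finite length over it, every term of $(\ast_k)$ has finite length, and additivity of length on short exact sequences gives that the alternating sum of the term-lengths equals the alternating sum of the homology-lengths.

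The key intermediate step is therefore to show $H_m(x_1,\ldots,x_d;G)_k=0$ for $2\le m\le k$ whenever $k\le n$, which I obtain by bootstrapping from the hypothesis $H_1(x_1,\ldots,x_{m'};G)_{<n}=0$ (valid for every $m'\le d$) via Lemma \ref{L5.1}, taking the variable-bound in that lemma to be $d$. Tracking degrees: from $H_1(x_1,\ldots,x_{m'};G)_j=0$ for all $j\le n-1$ and all $m'\le d$, Lemma \ref{L5.1} yields $H_2(x_1,\ldots,x_{m'};G)_j=0$ for all $j\le n$; reapplying gives $H_3(\cdots)_{j}=0$ for $j\le n+1$, and inductively $H_m(x_1,\ldots,x_{m'};G)_j=0$ for all $j\le n+m-2$ and all $m'\le d$. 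In particular, for $m\ge 2$ one has $k\le n\le n+m-2$, so every homology $H_{i+1}(x_1,\ldots,x_d;G)_k$ appearing in $(\ast_k)$ is killed (the low-degree vanishing $H_i(\,\cdot\,)_j=0$ for $j<i$ of Remark \ref{R5.2}(4) covers the remaining boundary cases automatically).

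With all higher homology of $(\ast_k)$ vanishing, the additivity computation collapses to
\[
\lm(H_0(\ast_k)) \;=\; \sum_{i=1}^{k}(-1)^{i-1}\lm\big((F_i)_k\big) \;=\; \sum_{i=1}^{k}(-1)^{i-1}\binom{d}{i}\lm(G_{k-i}),
\]
which is the asserted formula. The only genuine work is the degree bookkeeping in the bootstrap, and the point to be careful about is that the hypothesis controls only the \emph{first} Koszul homology $H_1$: the vanishing of all the higher $H_m$ must be produced entirely through the iterated application of Lemma \ref{L5.1}, and the degree shift by $+1$ at each step is exactly compensated by the strict bound $<n$ in the hypothesis, so that the needed range $2\le m\le k\le n$ comes out precisely and no sharper input on $G$ is required.
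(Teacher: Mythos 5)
Your proof is correct and takes essentially the same route as the paper's: identify $H_i(\ast_k)$ with the Koszul homology $H_{i+1}(x_1,\ldots,x_d;G)_k$ for $i>0$, kill these by iterating Lemma~\ref{L5.1} starting from the hypothesis on $H_1$, and then read off the length formula as an Euler characteristic. Your write-up merely makes explicit two points the paper leaves implicit: the degree bookkeeping in the bootstrap ($H_m$ vanishes in degrees $\leq n+m-2$), and the fact that the hypothesis is needed for all $m\leq d$ because Lemma~\ref{L5.1} inducts on the number of variables.
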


\begin{proof}
The proof follows immediately if we show that $H_i(\ast_k) = 0$ for $i > 0$. Note that $H_i(\ast_k) = H_{i 
+ 1}(x_1,\ldots,x_d;G)_k$ for $i > 0$. Since $H_1(x_1,\ldots,x_d; G)_j = 0$ for $j < n$, $H_{i + 1}
(x_1,\ldots,x_d; G)_{j + i} = 0$ for $j < n$ by Lemma \ref{L5.1}. In particular,   $H_{i+1}(x_1,\ldots,x_d; G)
_k = 0$ for each $i \geq 1$, proving the proposition.
\end{proof}

\begin{proposition}\label{P5.1}
Let $G = \oplus_{i\geq 0} G_i$ be a graded ring with $x_1,\ldots,x_d \in G_1$. Then with notations as 
above, $H_1(x_1,\ldots,x_k;G)_{\leq n} = 0$
for $1 \leq k \leq l$ if and only if $$(x_1,\ldots,x_{k-1}):x_k \inc (x_1,\ldots,x_{k-1}) + \oplus_{i \geq n} G_i$$ for $1 \leq k \leq l.$
\end{proposition}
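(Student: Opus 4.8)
The plan is to extract both implications from the graded long exact sequence of Remark \ref{R5.2}(2) taken with $i=1$. For a fixed $k$ its degree-$j$ strand reads
$$H_1(x_1,\ldots,x_{k-1})_j \overset{\alpha}{\rar} H_1(x_1,\ldots,x_k)_j \overset{\beta}{\rar} H_0(x_1,\ldots,x_{k-1})_{j-1} \xrightarrow{\cdot x_k} H_0(x_1,\ldots,x_{k-1})_j,$$
and since $H_0(x_1,\ldots,x_{k-1}) = G/(x_1,\ldots,x_{k-1})$, the computation I would record first is that the kernel of this last map is $\big((x_1,\ldots,x_{k-1}):x_k\big)_{j-1}\big/(x_1,\ldots,x_{k-1})_{j-1}$. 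By exactness this kernel equals $\Im(\beta)_j$, so the colon ideal modulo $(x_1,\ldots,x_{k-1})$ is exactly the image of $H_1(x_1,\ldots,x_k)$ in $H_0(x_1,\ldots,x_{k-1})[-1]$; this single identification is what ties the two sides of the statement together.

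Granting it, the forward direction needs no induction. Fixing $k\leq l$ and assuming $H_1(x_1,\ldots,x_k)_{\leq n}=0$, I would conclude $\Im(\beta)_j=0$ for $j\leq n$, hence that $\big((x_1,\ldots,x_{k-1}):x_k\big)_i = (x_1,\ldots,x_{k-1})_i$ for every $i\leq n-1$, that is, in all degrees $i<n$. Since the colon ideal is homogeneous, its components in degrees $\geq n$ lie in $\oplus_{i\geq n}G_i$ and the rest lie in $(x_1,\ldots,x_{k-1})$, which is precisely the asserted containment. For the converse I would induct on $k$: the base case $k=1$ is just $\ann(x_1)\inc\oplus_{i\geq n}G_i$ read off from the same degree count, and in the inductive step the colon hypothesis at level $k$ makes the kernel of $\cdot x_k$ vanish in degree $j-1$, hence $\Im(\beta)_j=0$, for $j\leq n$; exactness then forces $\alpha$ onto in these degrees, presenting $H_1(x_1,\ldots,x_k)_j$ as a quotient of $H_1(x_1,\ldots,x_{k-1})_j$, which vanishes for $j\leq n$ by the induction hypothesis fed by the colon condition at level $k-1$.

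I expect the only real hazard to be the degree bookkeeping around the $[-1]$ shift: a class in $H_1(x_1,\ldots,x_k)_j$ yields a colon element of degree $j-1$, so the hypothesis ``$H_1$ vanishes in degrees $\leq n$'' corresponds to ``colon $\inc$ ideal in degrees $\leq n-1$,'' and it is this ``$<n$'' that matches the tail $\oplus_{i\geq n}G_i$ in the statement. Everything after the identification is formal chasing of the exact sequence; should I wish to avoid it, the same containment can be obtained directly from the skew-symmetric description of $1$-boundaries in Remark \ref{R5.2}(5), by writing a degree-$j$ cycle whose last coordinate is the given colon element and clearing it against the Koszul relations, but the exact-sequence route is cleaner.
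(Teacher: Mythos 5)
Your proof is correct, but it runs on different machinery than the paper's. You derive both implications from the long exact sequence of Remark \ref{R5.2}(2), via the single identification $\Im(\beta)_j = \ker\bigl((\cdot x_k)\colon (G/(x_1,\ldots,x_{k-1}))_{j-1} \to (G/(x_1,\ldots,x_{k-1}))_j\bigr) = \bigl((x_1,\ldots,x_{k-1}):x_k\bigr)_{j-1}/(x_1,\ldots,x_{k-1})_{j-1}$, after which everything is a diagram chase; your degree bookkeeping (vanishing of $H_1$ in degrees $\leq n$ matching the colon containment in degrees $\leq n-1$, hence the tail $\oplus_{i\geq n}G_i$) is exactly right, and you correctly place the induction on $k$ only where it is needed, namely for the implication from the colon condition to the vanishing of $H_1$. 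The paper instead argues entirely at the level of elements using the skew-symmetric description of Koszul $1$-boundaries from Remark \ref{R5.2}(5): in the direction colon $\Rightarrow$ vanishing it takes a cycle $(r_1,\ldots,r_k)$, uses the colon hypothesis to rewrite $r_k$, applies the inductive hypothesis to the modified $(k-1)$-cycle, and explicitly assembles a $k\times k$ skew-symmetric matrix bordering the inductively obtained one; in the converse it builds a cycle from a colon element and reads off membership in $(x_1,\ldots,x_{k-1})$ from the matrix equation. Your route is shorter and more structural, and it makes the $[-1]$ shift transparent rather than something to track by hand. The paper's route, while more laborious here, establishes the explicit matrix-assembly technique that it reuses almost verbatim in Proposition \ref{P5.2}, Proposition \ref{P5.3}, and Lemma \ref{L5.0}, so the element-wise formulation is not incidental but part of the paper's working toolkit. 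Your closing remark that the skew-symmetric description gives an alternative proof is accurate: that alternative is precisely the paper's argument.
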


\begin{proof}
First assume that $(x_1,\ldots,x_{k-1}): x_k \inc (x_1,\ldots,x_{k-1}) + \oplus_{i \geq n} G_i$ for $1 \leq k 
\leq l$. We want to prove
that $H_1(x_1,\ldots,x_k;G)_{\leq n} = 0$ by induction on $k$. 

When $k = 1$, we see that $(0:_G x_1) \inc \oplus_{i \geq n} G_i$. Note that $H_1(x_1;G) \simeq (0:_G
x_1)[-1]$. Hence $H_1(x_1;G)_{\leq n} = 0$. 

Let $\ov{(r_1,\ldots,r_k)} \in H_1(x_1,\ldots,x_k;G)_j$ for some $j \leq n$, where $k > 1$. Thus we have 
$r_i \in G_{j-1}$ and $\sum_{i=1}^k r_i x_i = 0$ in $G_j$. Thus $r_k \in (x_1,\ldots,x_{k-1}):_G x_k \inc 
(x_1,\ldots,x_{k-1}) + \oplus_{i \geq n}G_i$ by assumption. Thus $r_k = \sum_{i = 1}^{k-1} s_ix_i + s_k$ 
where $s_k \in \oplus_{i \geq n}G_i$. By degree arguments, we may assume that $s_i \in G_{j-2}$, $i = 
1, \ldots, k-1$ and $s_k = 0$ in $G$. 

Thus $0 = \sum_{i=1}^{k-1} (r_i + s_ix_k)x_i$. By induction $\ov{(r_1,\ldots,r_{k-1})} + \ov
{(x_ks_1,\ldots,x_ks_{k-1})} = 0$ in $H_1(x_1,\ldots,x_{k-1};G)$, i.e., by Remark \ref{R5.2}(6), there is a $
(k-1) \times (k-1)$ skew-symmetric matrix $S$ with entries in $G$ such that $(r_1,\ldots,r_{k-1}) + 
(x_ks_1,\ldots,x_ks_{k-1}) = (x_1,\ldots,x_{k-1}) S$. We also know that $r_k = \sum_{i = 1}^{k-1} s_ix_i$. 
Hence we have 
\[
\left( \begin{array}{c}r_1 \\ \vdots \\ \vdots \\ r_k \end{array} \right) 
= \left( \begin{array}{ccc|c}& & & -s_1\\ & S & & - s_{2} \\ & & & \\ \hline s_{1} & s_{2} & \cdots & 0 \end{array}\right)
\left( \begin{array}{c}x_1 \\ \vdots \\  \vdots \\ x_k \end{array} \right).
\]
which shows by Remark \ref{R5.2}(5) that $\ov{(r_1,\ldots,r_k)} = 0$ in $H_1(x_1,\ldots,x_k;G)$.

Conversely, let $r_k \in (x_1,\ldots,x_{k-1}): x_k$. Without loss of generality, we may assume $r_k \in G_j$ for some $j$. Write $r_kx_k = -\sum_{i = 0}^{k-1}r_ix_i$, for $r_i \in G_j$, $i = 1,\ldots,k - 1$. Thus $\overline{(r_1,\ldots,r_k)} \in H_1(x_1,\ldots,x_k; G)_{j+1}$. 

If $j \geq n$, there is nothing to prove. If $j \leq n - 1$, since $H_1(x_1,\ldots,x_k;G)_{\leq n} = 0$, we can write $(r_1,\ldots,r_k) = (x_1,\ldots,x_k) S$, where $S$ is the skew-symmetric matrix with entries in $G$. In particular, $r_k \in (x_1,\ldots,x_{k-1})$, finishing the proof.
\end{proof}

\vskip 5 pt

With  the setup as in Setup \ref{S1}, let $\G = \gr_{\mathfrak F}(R) = R/I_1 \oplus I_1/I_2 \oplus \cdots$ be 
the associated graded ring of the admissible $I$-filtration $\mathfrak F = \{I_n\}_{n\geq 0}$. If $s \in R$ is 
an element such that $s \in I_k \setminus I_{k+1}$,  we let $s'$ denote $s + I_{k+1}$, the leading form of 
$s$ in $\G$. Let $J = (x_1,\ldots,x_d)$ be a minimal reduction of $I$. 

\begin{proposition}\label{P5.2}
Let $(R,\m,\sk)$ be a Cohen-Macaulay local ring, $I$ an $\m$-primary ideal and $J = (x_1,\ldots,x_d)$ a 
minimal reduction of $I$, $\mathfrak F = \{I_n\}_{n\geq 0}$ an admissible $I$-filtration and $\G = \gr_
{\mathfrak F}(R)$ be the graded ring associated to $\mathfrak F$. With notations as in the discussion 
above, $\mathfrak F$ is $n$-standard with respect to $J$ if and only if $H_1(x'_1,\ldots,x'_k; \G)_j = 0$ for $j < n$ and $1 \leq k \leq d$.
\end{proposition}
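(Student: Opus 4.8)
The plan is to translate the ideal-theoretic condition of $n$-standardness into a statement about the vanishing of degree-$<n$ pieces of the first Koszul homology of the leading forms $x_1',\ldots,x_k'$ in $\G$, using Proposition \ref{P5.1} as the bridge. The key observation is that Proposition \ref{P5.1} already gives us that $H_1(x_1',\ldots,x_k';\G)_{\leq n} = 0$ for $1 \leq k \leq d$ is equivalent to the colon condition $(x_1',\ldots,x_{k-1}'):x_k' \inc (x_1',\ldots,x_{k-1}') + \oplus_{i \geq n}\G_i$ for all such $k$. (A minor bookkeeping point: Proposition \ref{P5.1} is stated with $\leq n$ while the present statement uses $< n$, so I will need to apply that proposition with the index $n-1$ in place of $n$, or equivalently verify directly that degree $j$ pieces vanish for $j \leq n-1$.) So the real content is to show that this colon condition in $\G$ is equivalent to the lifted condition $J \cap I_k = JI_{k-1}$ for all $k \leq n$ in $R$.

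First I would set up the dictionary between $\G$ and $R$. An element of $\G_j$ is a leading form $s'$ of some $s \in I_j \setminus I_{j+1}$, and multiplication by $x_i'$ (which lives in $\G_1$ because $x_i \in I_1 \setminus I_2$ by the Marley remark) sends $\G_j$ to $\G_{j+1}$ and corresponds to multiplication by $x_i$ up to higher-order terms. The forward direction would proceed as follows: assume $\mathfrak F$ is $n$-standard with respect to $J$, take a relation $\sum_{i=1}^k r_i' x_i' = 0$ in $\G_j$ with $j \leq n$ representing a class in $H_1$, lift it to $R$, and use $J \cap I_j = JI_{j-1}$ to rewrite the lift as a Koszul boundary modulo $I_{j+1}$; the skew-symmetric matrix description in Remark \ref{R5.2}(5) is exactly what lets me conclude the homology class vanishes. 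Concretely, the vanishing of the class amounts to expressing $x_k'$ times something as lying in $(x_1',\ldots,x_{k-1}')$, which is the colon statement.

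For the converse, I would assume the Koszul vanishing (equivalently the colon condition) and show $J \cap I_k = JI_{k-1}$ for each $k \leq n$ by induction on $k$. Take $y \in J \cap I_k$, so $y = \sum x_i a_i$ with $y \in I_k$. The leading form of $y$ has degree $\geq k$, and using the colon condition together with the fact that $x_i \in I_1$ I would peel off the top-degree term: the relation among the $x_i'$ forces, via the skew-symmetric matrix, a rewriting of $y$ modulo $I_{k}$ that places it in $JI_{k-1}$ plus a higher-order correction, and the induction (or a degree/filtration argument) closes the gap. The base case $k \leq 1$ is immediate since $I_0 = R$ makes $JI_{k-1} = J$.

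I expect the main obstacle to be the careful handling of leading forms and the associated-graded bookkeeping in both directions: an honest relation $\sum r_i' x_i' = 0$ in $\G$ only gives $\sum r_i x_i \in I_{j+1}$ in $R$ (not $=0$), so the equivalence between ``Koszul boundary in $\G$'' and ``the colon/standardness condition in $R$'' requires tracking how leading forms add and cancel, and ensuring that the correction terms genuinely land in the next filtration level $I_{j+1}$ rather than merely in some $I^{j+1-k}$. The skew-symmetric matrix formulation of Remark \ref{R5.2}(5), combined with the degree-homogeneity noted in Remark \ref{R5.2}(3), is the tool I would lean on to keep this cancellation controlled, and matching the $<n$ versus $\leq n$ indexing against Proposition \ref{P5.1} is the one place where an off-by-one slip is easy.
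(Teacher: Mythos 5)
Your proposal takes essentially the same route as the paper's own proof: in both directions one lifts relations between $\G$ and $R$, uses $n$-standardness (respectively, the graded Koszul vanishing) to rewrite the relevant element as a Koszul boundary via a skew-symmetric matrix over the regular sequence $x_1,\ldots,x_d$ (this is where Cohen--Macaulayness enters), and tracks the filtration degrees; the paper simply does this directly rather than routing through Proposition~\ref{P5.1}, a detour that is cosmetic here since the arguments you actually sketch are the direct ones. The subtleties you flag --- the $<n$ versus $\leq n$ off-by-one, and the fact that a relation in $\G_j$ only lifts to a congruence modulo $I_{j+1}$ so the correction terms must be shown to land in the right filtration level --- are precisely the points the paper's (rather terse) proof also has to handle, so your plan is sound and matches it.
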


\begin{proof}

\noindent 
Assume that $\mathfrak F$ is $n$-standard with respect to $J$. Suppose that for $j < n$ and
$0 \leq k \leq d$, $\ov{(r'_1,\ldots,r'_k)} \in H_1(x'_1,\ldots,x'_k; \G)_j$, i.e., $\sum_{i = 1}^l r'_i x'_i = 0$ in 
$\G$,
where $\deg(r'_i) = j-1$. Thus $\sum r_i x_i \in I^{j+1} \cap J = J I_j$, i.e., we can write $\sum_{i=1}^k r_i 
x_i = \sum_{i=1}^d s_i x_i$, where $s_i \in I_j$. Thus there is a skew-symmetric $k \times k$ matrix $S_k
$ such that $$(r_1,\ldots,r_k) = (x_1,\ldots,x_k) S_k + (s_1,\ldots,s_k).$$

Thus $(r'_1,\ldots,r'_k) = (x'_1,\ldots,x'_k) S'_k$ in $\G_{j-1}^{\oplus d}$, which means that $\ov{(r'_1,\ldots,r'_k)} = 0$ proving that $H_1(x'_1,\ldots,x'_k;\G)_j = 0$ for $ j < n$.

Conversely, suppose $\sum_{i=1}^k r_ix_i \in I_j$ for some $j \leq n$, with $r_i \notin I_{j-1}$. Then $
\sum_{i=1}^k r'_ix'_i = 0$ in $\G_{\leq j-1}$. Thus $\ov{(r'_1,\ldots,r'_k)} \in H_1(x'_1,\ldots,x'_k;\G)_{\leq 
j-1} = 0$, i.e., there is a skew-symmetric $k \times k$ matrix $S_k$ with entries in $R$ such that $
(r'_1,\ldots,r'_k) = (x'_1,\ldots,x'_k) S'_k$ in $\G_{\leq j-1}^{\oplus d}$, i.e., $(r_1,\ldots,r_k) = 
(x_1,\ldots,x_k) S_k+ (s_1,\ldots,s_k)$ for some $s_i \in I_{j-1}$. Thus $\sum r_ix_i = \sum s_ix_i \in JI_
{j-1}$, for each $j \leq n$, i.e., $\mathfrak F$ is $n$-standard with respect to $J$.
\end{proof}

As a consequence, we get an extension of a theorem of Valabrega and Valla \cite[Theorem 2.3]{VV}.
\begin{corollary}
Let $(R,\m,\sk)$ be a Cohen-Macaulay local ring, $I$ an $\m$-primary ideal, $J = (x_1,\ldots,x_d)$ a 
minimal reduction of $I$,  $\mathfrak F = \{I_n\}_{n\geq 0}$ an admissible $I$-filtration and $\G = \gr_
{\mathfrak F}(R)$ be the graded ring associated to $\mathfrak F$. With notations as in the discussion 
above, $x'_1,\ldots,x'_d$ is a regular sequence in $\G$ (and hence $\G$ is Cohen-Macaulay) if and only 
if $I_n \cap J = I_{n-1}J$ for all $n$.
\end{corollary}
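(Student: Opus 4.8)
The plan is to reduce the statement to Proposition \ref{P5.2} together with the standard Koszul characterization of regular sequences. First I would observe that the condition $I_n \cap J = I_{n-1}J$ for all $n$ is exactly the statement that $\mathfrak F$ is $n$-standard with respect to $J$ for \emph{every} $n \geq 1$. By Proposition \ref{P5.2} this holds if and only if $H_1(x'_1,\ldots,x'_k;\G)_j = 0$ for every $j$ and every $1 \leq k \leq d$; since letting $n$ range over all integers lets $j$ range over all integers, this is the same as the (ungraded) vanishing $H_1(x'_1,\ldots,x'_k;\G) = 0$ for all $1 \leq k \leq d$. Thus the corollary reduces to showing that this vanishing is equivalent to $x'_1,\ldots,x'_d$ being a regular sequence on $\G$.

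For the direction ``regular sequence $\Rightarrow$ vanishing,'' I would use that an initial segment $x'_1,\ldots,x'_k$ of a regular sequence is again a regular sequence (all the $x'_i$ lie in $\G_+$, so properness of the generated ideal is automatic), whence its Koszul complex is acyclic in positive homological degrees; in particular $H_1(x'_1,\ldots,x'_k;\G) = 0$ for each $k \leq d$.

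For the converse I would induct on $k$, proving that $x'_1,\ldots,x'_k$ is a regular sequence. The base case $k=1$ is immediate, since as noted in the proof of Proposition \ref{P5.1} one has $H_1(x'_1;\G) \simeq (0 :_\G x'_1)[-1]$, so its vanishing says exactly that $x'_1$ is a nonzerodivisor. For the inductive step, assume $x'_1,\ldots,x'_{k-1}$ is a regular sequence, so $H_i(x'_1,\ldots,x'_{k-1};\G) = 0$ for all $i \geq 1$. Feeding this into the long exact sequence of Remark \ref{R5.2}(2) for the passage from $k-1$ to $k$ variables kills the terms $H_i(x'_1,\ldots,x'_{k-1};\G)$ with $i \geq 1$, leaving, at $i=1$,
$$0 \to H_1(x'_1,\ldots,x'_k;\G) \to \bigl(\G/(x'_1,\ldots,x'_{k-1})\bigr)[-1] \xrightarrow{\,\cdot x'_k\,} \G/(x'_1,\ldots,x'_{k-1}),$$
which identifies $H_1(x'_1,\ldots,x'_k;\G)$ with the kernel of multiplication by $x'_k$ on $\G/(x'_1,\ldots,x'_{k-1})$ (up to a degree shift). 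Hence the hypothesis $H_1(x'_1,\ldots,x'_k;\G)=0$ says precisely that $x'_k$ is a nonzerodivisor on $\G/(x'_1,\ldots,x'_{k-1})$, completing the induction.

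Finally, a regular sequence of length $d$ forces $\depth \G \geq d$; since $\G$ is the associated graded ring of an $\m$-primary admissible filtration we have $\dim \G = \dim R = d$, so $\depth \G = \dim \G = d$ and $\G$ is Cohen-Macaulay. I expect the only delicate point to be bookkeeping the degree shifts in the long exact sequence and confirming that the displayed connecting-then-multiplication map is exactly multiplication by $x'_k$ on $H_0$; once the higher homologies are annihilated by the inductive hypothesis, everything else is formal.
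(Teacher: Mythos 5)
Your proposal is correct and follows exactly the route the paper intends: the paper states this corollary without proof as an immediate consequence of Proposition \ref{P5.2}, and your argument is precisely that reduction, with the standard equivalence between vanishing of $H_1$ of the Koszul complex (for all initial segments of positive-degree homogeneous elements) and regularity of the sequence filled in via the long exact sequence of Remark \ref{R5.2}(2). The only details you supply beyond the paper's implicit proof --- the induction on $k$, the degree-shift bookkeeping, and the dimension count $\dim \G = \dim R = d$ giving Cohen-Macaulayness --- are exactly the standard facts the authors took for granted, and your handling of them is sound.
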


\begin{corollary}\label{C5.1}
Let $(R,\m,\sk)$ be a Cohen-Macaulay local ring with infinite residue field, $I$ an integrally closed $\m$-
primary ideal and  $J = (x_1,\ldots,x_d)$ be a minimal reduction of $I$. With notations as above, $H_i
(x'_1,\ldots,x'_k;\G)_i = 0$ for all $k$. 
\end{corollary}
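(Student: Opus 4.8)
The plan is to induct on the homological degree $i \geq 1$, using the two-standardness of $I$ to seed the base case and Lemma \ref{L5.1} to propagate the vanishing upward along the diagonal.

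For the base case $i = 1$, I would appeal to Remark \ref{R5.1}(4): since $I$ is an integrally closed $\m$-primary ideal in a Cohen-Macaulay local ring with infinite residue field, the Huneke--Itoh theorem gives $I^2 \cap J = IJ$, so $I$ is two-standard with respect to $J$. Feeding this into Proposition \ref{P5.2} (for the $I$-adic filtration, with $n = 2$) shows $H_1(x'_1,\ldots,x'_k;\G)_j = 0$ for all $j < 2$ and $1 \leq k \leq d$. Taking $j = 1$ yields precisely $H_1(x'_1,\ldots,x'_k;\G)_1 = 0$ for all $k$, which is the $i = 1$ case. (The degree $j = 0$ part is automatic anyway, since $H_1(\-;\G)_0 = 0$ by Remark \ref{R5.2}(4).)

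For the inductive step, assume $H_i(x'_1,\ldots,x'_k;\G)_i = 0$ for all $1 \leq k \leq d$. I would apply Lemma \ref{L5.1} with $n = d$ (the fixed dimension, so that ``for all $k$'' means $1 \leq k \leq d$), homological index $i$, and internal degree $j = i$. The lemma then returns $H_{i+1}(x'_1,\ldots,x'_k;\G)_{i+1} = 0$ for all $1 \leq k \leq d$, closing the induction.

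The substantive content is concentrated entirely in the base case, which repackages the integral-closedness hypothesis through Remark \ref{R5.1}(4); once $H_1$ vanishes on the diagonal $j = 1$, everything else is formal and follows from the long exact sequence of Koszul homologies built into Lemma \ref{L5.1}. I therefore do not anticipate a genuine obstacle, only the bookkeeping of keeping the triple $(i, j, k)$ aligned so that $j = i$ and $n = d$ are used consistently at each stage.
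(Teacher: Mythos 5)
Your proposal is correct and follows essentially the same route as the paper's own proof: Remark \ref{R5.1}(4) plus Proposition \ref{P5.2} give the base case $H_1(x'_1,\ldots,x'_k;\G)_1 = 0$, and the paper's phrase ``repeated application of Lemma \ref{L5.1}'' is exactly your induction on $i$ along the diagonal $j = i$ with $n = d$.
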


\begin{proof}
As noted before in Remark \ref{R5.1}(4), $J \cap I^2 = JI$. Hence by the Proposition \ref{P5.2}, $H_1
(x'_1,\ldots,x'_k;\G)_1 = 0$. The corollary follows by repeated application of the Lemma \ref{L5.1}. 
\end{proof}

\section{Invariance of a Length Associated to Minimal Reductions}\label{6.1}

A general question to ask is: Let $(R,\m,\sk)$ be a Cohen-Macaulay local ring with infinite residue field, 
$I$ an $\m$-primary ideal,
$J$ a minimal reduction of $I$, and $\mathfrak F = \{I_n\}_{n \in \Z}$ an admissible $I$-filtration.
Is $\lm(I_n/JI_{n-1})$ independent of the minimal reduction $J$ chosen?

In \cite[Theorem 3.6]{M}, Marley proves that the above question has a positive answer if one assumes 
that $\depth(\gr_{\mathfrak F}(R)) \geq \dim(R) - 1$. 
In fact, he proves that in this case $\lm(I_{n+1}/JI_{n}) = e_0(I) + \sum_{i=0}^n(-1)^{i+1}\binom{d-1}{i}\lm_
{\mathfrak F}(n-i)$ for all $n$,
 where $\lm_{\mathfrak F}(j)$ denotes $\lm(I_{j}/I_{j+1})$ for the rest of this section. This leads to a more 
specific question: 

\begin{question}\label{Q6.1}{\rm
Let $(R,\m,\sk)$ be a Cohen-Macaulay local ring with infinite residue field, $I$ an $\m$-primary ideal and $\mathfrak F = \{I_n\}_{n \in \Z}$ an admissible $I$-filtration. Given a minimal reduction $J$ of $I$, when is it true that
$$\lm(I_{n+1}/JI_{n}) = e_0(I) + \sum_{i=0}^n(-1)^{i+1}\binom{d-1}{i}\lm_{\mathfrak F}(n-i),$$ where $\lm_{\mathfrak F}(j)$ denotes $\lm(I_{j}/I_{j+1})$?
}\end{question}

\begin{remark}\label{R6.1}{\rm As observed before, by Theorem 3.6 in \cite{M}, Question \ref{Q6.1} has a positive answer for all n when $\depth(\gr_{\mathfrak F}(R)) \geq \dim(R) - 1$.
In particular, Question~\ref{Q6.1} is true for one-dimensional Cohen-Macaulay local rings.
}\end{remark}

In his paper \cite[Example 2]{P}, Puthenpurakal gives the following example which shows that the above formula doesn't hold in general. 

\begin{example}{\rm
Let $R = \sk[|x,y|]$, $I = (x^7,x^6y,x^2y^5,y^7)$ and $J = (x^7,y^7)$. In this case, $d = \dim(R) = 2$. One can use a computer algebra package (we use Macaulay 2) to see that $\lm(I^3/I^2J) = 3$ whereas $e_0(I) + \lm(I/I^2) - \lm(I^2/I^3) = 1$. Thus the above formula does not hold even for the $I$-adic filtration when $n = 2$ in dimension $2$.

\noindent
Note that in this case $I^2 \cap J \neq IJ$. Thus $I$ is not two-standard with respect to $J$. 
}\end{example}

In Theorem \ref{T5.1}, we show that $\lm(I_{n+1}/JI_{n})$ is independent of the minimal reduction chosen when the admissible $I$-filtration $\mathfrak F = \{I_n\}_{n \in \Z}$ is $n$-standard with respect to $J$ for every minimal reduction $J$ of $I$ by proving that Question \ref{Q6.1} has a positive answer in this case.

\begin{proposition}\label{P5.3}
Let $(R,\m,\sk)$ be a Cohen-Macaulay local ring, $I$ an $\m$-primary ideal, $\mathfrak F = \{I_n\}_{n \in \Z}$ an admissible $I$-filtration and $J$ be a minimal reduction of $I$. If $\mathfrak F$ is $n$-standard with respect to $J$, then for $1\leq k \leq n$, $\lm(JI_{k-1}/JI_k)$ $= d\lm(I_{k-1}/I_{k}) - \binom{d}{2}\lm(I_{k-2}/I_{k-1}) + \cdots + (-1)^{k-1}\binom{d}{k}\lm(R/I_1),$$$\text{i.e., }\lm(JI_{k-1}/JI_k) = \sum_{i=1}^k (-1)^{i-1}\binom{d}{i}\lm_{\mathfrak F}(k-i)\quad \text{ for } 1 \leq k \leq n.$$
\end{proposition}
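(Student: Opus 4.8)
The plan is to reduce the length count to the Koszul machinery of Section~\ref{5.2} and then identify the resulting homology module with $JI_{k-1}/JI_k$. Put $G=\G=\gr_{\mathfrak F}(R)$, so that $G_i=I_i/I_{i+1}$ and $\lambda(G_i)=\lambda_{\mathfrak F}(i)$, and note that $x_1,\ldots,x_d$ is an $R$-regular sequence, since $J$ is generated by a system of parameters in the Cohen--Macaulay ring $R$. By Proposition~\ref{P5.2}, $n$-standardness of $\mathfrak F$ with respect to $J$ is equivalent to $H_1(x_1',\ldots,x_m';\G)_j=0$ for all $j<n$ and all $1\le m\le d$, which is exactly the hypothesis of Proposition~\ref{P5.0}. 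That proposition then gives $\lambda(H_0(\ast_k))=\sum_{i=1}^k(-1)^{i-1}\binom{d}{i}\lambda(G_{k-i})=\sum_{i=1}^k(-1)^{i-1}\binom{d}{i}\lambda_{\mathfrak F}(k-i)$ for every $1\le k\le n$. Since this is precisely the asserted formula, the whole statement reduces to the single claim $\lambda(H_0(\ast_k))=\lambda(JI_{k-1}/JI_k)$ for $1\le k\le n$.

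To prove this I would build an explicit isomorphism $\psi\colon H_0(\ast_k)=(G_{k-1})^{\oplus d}/\Im(d_2)\rar JI_{k-1}/JI_k$, defined on representatives by $(\overline{r}_1,\ldots,\overline{r}_d)\mapsto \sum_i r_ix_i+JI_k$, where $r_i\in I_{k-1}$ lifts $\overline{r}_i\in G_{k-1}$. This is well defined: changing a lift alters $\sum_i r_ix_i$ by an element of $\sum_i I_kx_i=JI_k$, and by Remark~\ref{R5.2}(5) the image of $d_2$ is spanned by vectors $(x_1',\ldots,x_d')S'$ with $S'$ skew-symmetric, which map to $\sum_{i,j}x_ix_j\widetilde{S}_{ji}+JI_k=0$. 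Because $JI_{k-1}=\sum_i x_iI_{k-1}$, the map $\psi$ is surjective, giving at once $\lambda(JI_{k-1}/JI_k)\le\lambda(H_0(\ast_k))$.

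The content of the proof is the injectivity of $\psi$. Suppose $\sum_i r_ix_i\in JI_k$ with $r_i\in I_{k-1}$. Writing $\sum_i r_ix_i=\sum_i y_ix_i$ with $y_i\in I_k$ and replacing $r_i$ by $w_i:=r_i-y_i$ leaves the leading forms $\overline{r}_i=\overline{w}_i\in G_{k-1}$ unchanged and produces a genuine relation $\sum_i w_ix_i=0$ with $w_i\in I_{k-1}$. Since $x_1,\ldots,x_d$ is a regular sequence this relation is Koszul, i.e.\ $(w_1,\ldots,w_d)=(x_1,\ldots,x_d)T$ for a skew-symmetric matrix $T$ over $R$. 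I would then clear the low-order part of $T$ by descending induction on the least filtration-order $c$ of its entries. If $c\ge k-2$, then every $t_{ij}\in I_{k-2}$ and $(\overline{w}_i)=(x_1',\ldots,x_d')\,\overline{T}$ lies in $\Im(d_2)$, so the class is zero. If $c<k-2$, the order-$c$ leading matrix $T_c'$ is skew-symmetric and satisfies $d_2^{\G}(T_c')=0$; since $c+2\le k-1\le n$, Lemma~\ref{L5.1} applied to the degree-$(<n)$ vanishing of $H_1$ yields $H_2(x_1',\ldots,x_d';\G)_{c+2}=0$, so $T_c'=d_3^{\G}(\overline{\eta})$ for some $\overline{\eta}$. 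Lifting $\eta$ to $\wedge^3R^d$ and subtracting its Koszul image $d_3(\eta)$ from $T$ leaves $(x_1,\ldots,x_d)T$ unchanged (as $d_2d_3=0$), preserves skew-symmetry, and strictly raises the least order; iterating lands in the case $c\ge k-2$ and proves $\psi$ injective.

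The main obstacle is exactly this last step: reading a skew-symmetric Koszul matrix off the relation $\sum_i w_ix_i=0$ fails naively because the entries of $T$ need not lie in $I_{k-2}$ and low-order terms can cancel, so the cycle $(\overline{w}_i)$ need not visibly be a $d_2^{\G}$-boundary. The point to get right is that clearing these terms \emph{within} skew-symmetric relations requires exactness of the Koszul complex of the $x_i'$ one homological step higher, namely $H_2(x_1',\ldots,x_d';\G)=0$ in degrees $\le n$, which is not hypothesized directly but is supplied by Lemma~\ref{L5.1}. I would also note that for $1\le k\le n-1$ one can bypass this: there $H_1(x_1',\ldots,x_d';\G)_k=0$, so the short exact sequence $0\rar H_1(x_1',\ldots,x_d';\G)_k\rar H_0(\ast_k)\rar ((x_1',\ldots,x_d')\G)_k\rar 0$ forces $H_0(\ast_k)\cong ((x_1',\ldots,x_d')\G)_k$, a quotient of $JI_{k-1}/JI_k$; combined with the surjectivity of $\psi$ this already sandwiches the lengths into equality, so only the top degree $k=n$ genuinely needs the order-reduction above.
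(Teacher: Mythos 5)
Your proposal is correct, and its skeleton is the same as the paper's: both reduce the length formula to Proposition \ref{P5.0} via Proposition \ref{P5.2}, and both then identify $H_0(\ast_k)$ with $JI_{k-1}/JI_k$ by comparing the image of the last Koszul differential with the kernel of the evaluation map $(\bar r_1,\ldots,\bar r_d)\mapsto \sum_i r_ix_i + JI_k$ (your $\psi$ is the paper's $\psi$). The difference lies in the one step that carries the real content. The paper lifts a kernel element, writes $(r_1,\ldots,r_d)=(x_1,\ldots,x_d)S+(s_1,\ldots,s_d)$ with $S$ skew-symmetric over $R$ (using that $x_1,\ldots,x_d$ is a regular sequence), and then asserts that in $\G_{k-1}$ one gets $(\bar r_1,\ldots,\bar r_d)=(x'_1,\ldots,x'_d)S'$ with $S'$ skew-symmetric over $\G_{k-2}$, ``since $\deg(x'_i)=1$ and $\bar s_i=0$.'' As you observe, this passage is not automatic: the entries of $S$ need not lie in $I_{k-2}$, low-order terms can cancel in the products $x_jS_{ji}$, and so $S$ has no evident image in $\G_{k-2}$. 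Your order-raising argument supplies exactly the missing justification: the leading matrix $T'_c$ of the skew-symmetric matrix is a cycle in homological degree two and internal degree $c+2\le n$, where $H_2(x'_1,\ldots,x'_d;\G)$ vanishes by Lemma \ref{L5.1}, so subtracting a lifted Koszul boundary strictly raises the order of the entries without changing $(x_1,\ldots,x_d)T$, and finitely many iterations produce a skew-symmetric matrix with entries in $I_{k-2}$. Your closing remark, that for $k\le n-1$ the identification follows at once from $H_1(x'_1,\ldots,x'_d;\G)_k=0$ together with surjectivity of $\psi$, so that only the top degree $k=n$ needs the harder argument, is a clean simplification (and note that this shortcut is unavailable precisely at $k=n$, which is the case needed in the proof of Theorem \ref{T5.1}). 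Two trivia: Proposition \ref{P5.0} is stated for $k\ge 2$, so the case $k=1$ of the length formula should be checked directly (it is immediate, and your uniform injectivity argument covers it, paralleling the paper's separate base case); and your ``descending induction'' is really an ascending induction on the least order $c$, though the iteration you describe is the right one. In sum, your proof is not only correct but more complete than the paper's at its most delicate point.
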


\begin{proof}
Let $J = (x_1,\ldots,x_d)$ and let $\G = \gr_{\mathfrak F}(R)$. Then $x'_1,\ldots,x'_d \in \G_1$ is a system of parameters in $\G$. Now, since $R$ is Cohen-Macaulay, $x_1,\ldots,x_d$ is a regular system of parameters (for example, by Corollary 8.3.9 in \cite{SH}). We prove the proposition by induction on $k$.

For $k = 1$, consider the complex $0 \lrar (R/I_1)^{\oplus d} \overset{(x_1,\ldots,x_d)}\lrar (J/JI_1) \lrar 0$. This is clearly surjective. Injectivity 
can be seen as follows: If $\sum r_ix_i \in I_1J$, writing $\sum r_ix_i = \sum s_ix_i$ for $s_i \in I_1$, we see that there is a skew-symmetric matrix $S$ with entries in $R$ such that $(r_1,\ldots,r_d) = (x_1,\ldots,x_d)S + (s_1,\ldots,s_d)$. Since $(x_1,\ldots,x_d) \inc I_1$, we get $r_i \in I_1$, proving injectivity. 

For $k > 1$, consider the complex ($\ast_k$) as in Proposition \ref{P5.0}. In this case, since $\G_n = I_n/I_{n+1}$, we have $$0 \rar (R/I_1)^{\oplus \binom{d}{k}} \rar (I_1/I_2)^{\oplus \binom{d}{k-1}} \rar \cdots \rar (I_{k-2}/I_{k-1})^{\oplus \binom{d}{2}} \overset{\phi}\rar (I_{k-1}/I_k)^{\oplus d} \rar 0.\quad\quad(\ast_k)$$
Since $\mathfrak F$ is $n$-standard with respect to $J$, by Proposition \ref{P5.2}, $H_1(x'_1,\ldots,x'_m;\G)_{< n} = 0$ for $1 \leq m \leq d$.
Hence by Proposition \ref{P5.0}, $$d\lm(I_{k-1}/I_{k}) - \binom{d}{2}\lm(I_{k-2}/I_{k-1}) + \cdots + (-1)^{k-1}\binom{d}{k}\lm(R/I_1) = \lm(H_0(\ast_k)),$$
where $H_i(\ast_k)$ is the $i$th homology of the complex ($\ast_k$). Thus in order to prove the proposition, it is enough to prove that $H_0(\ast_k) \simeq JI_{k-1}/JI_k$, i.e., it is enough to prove that $\coker(\phi) \simeq JI_{k-1}/JI_k$. 

Consider the complex $(I_{k-2}/I_{k-1})^{\oplus \binom{d}{2}} \overset{\phi}\lrar (I_{k-1}/I_k)^{\oplus d} \overset{\psi = (x_1,\ldots,x_d)}\lrar JI^{k-1} /JI_k \lrar 0$. It is clear that $\psi$ is surjective. Therefore, to prove $\coker(\phi) \simeq JI_{k-1}/JI_k$, we need to show exactness in the middle, i.e., $\ker(\psi) = \Im(\phi)$. 

Represent the elements of $(I_{k-2}/I_{k-1})^{\oplus \binom{d}{2}} = \G_{k-2}^{\oplus \binom{d}{2}}$ as $\left( \begin{array}{cccc}0& g_{12}& \cdots & g_{1k}\\ -g_{12}& 0 & \cdots &  g_{2k} \\ \vdots & & \ddots & \vdots \\ -g_{1k} & -g_{2k} & \cdots & 0 \end{array}\right)$ where $g_{ij} \in \G_{k-2}$ for $1 \leq i < j \leq d$.
Then $$\phi\left(\left( \begin{array}{cccc}0& - g_{12}& \cdots & -g_{1k}\\ g_{12}& 0 & \cdots & - g_{2k} \\ \vdots & & \ddots & \vdots \\ g_{1k} & g_{2k} & \cdots & 0 \end{array}\right)\right) = (x'_1,\ldots,x'_d)\left( \begin{array}{cccc}0& g_{12}& \cdots & g_{1k}\\ -g_{12}& 0 & \cdots &  g_{2k} \\ \vdots & & \ddots & \vdots \\ -g_{1k} & -g_{2k} & \cdots & 0 \end{array}\right) \in \G_{k-1}^{\oplus d}.$$ 
Thus $\Im(\phi) = \{(x_1,\ldots,x_d) \cdot S\text{ where }S\text{ is a skew-symmetric matrix with entries in }\G_{k-2}\}$.

Now let $(\bar{r}_1,\ldots,\bar{r}_d) \in \ker(\psi)$, where $r_i \in I_{k-1}$ and $\bar{}$ denotes going modulo
$I_{k}$. Hence $\sum_{i = 1}^d r_ix_i = \sum_{i = 1}^d s_ix_i$ where $s_i \in I_k$. Since $x_1,\ldots,x_d$
is a regular sequence, we see that $(r_1,\ldots,r_d) = (x_1,\ldots,x_d) \cdot S + (s_1,\ldots,s_d)$ where
$S$ is a skew-symmetric matrix with entries in $R$. Thus in $\G_{k-1}$, $(\bar{r}_1,\ldots,\bar{r}_d) = (x'_1,\ldots,x'_d) \cdot S'$, where $S'$ is the skew-symmetric matrix whose entries are in $\G_{k-2}$ since $\deg(x'_i) = 1$ and $\bar {s_i} = 0$ in $\G$. Thus $\ker(\psi) = \Im(\phi)$ proving that $\coker(\phi) \simeq JI_{k-1}/JI_k$, which finishes the proof.
\end{proof}

The following theorem shows that Question \ref{Q6.1} has a positive answer when $\mathfrak F$ is $n$-standard with respect to $J$. 

\begin{theorem}\label{T5.1}
Let $(R,\m,\sk)$ be a Cohen-Macaulay local ring with an infinite residue field $\sk$. If $I$ is an $\m$-primary ideal, $\mathfrak F = \{I_n\}_{n \in \Z}$ an admissible $I$-filtration and $J$ is a minimal reduction of $I$ such that $J \cap I_k = JI_{k-1}$ for $1 \leq k \leq n$, i.e., $\mathfrak F$ is $n$-standard with respect to $J$, then for $0 \leq k \leq n$, $\lm(I_{k+1}/JI_{k}) = e_0(I) - \lm_{\mathfrak F}(k) + (d-1)\lm_{\mathfrak F}(k-1) +\cdots + (-1)^{k}\binom{d-1}{k-1}\lm_{\mathfrak F}(1) +(-1)^{k+1} \binom{d-1}{k}\lm_{\mathfrak F}(0)$, $$\text{i.e., } \lm(I_{k+1}/JI_k) = e_0(I) + \sum_{i=0}^k(-1)^{i+1}\binom{d-1}{i}\lm_{\mathfrak F}(k-i).\quad\text{ for }0 \leq k \leq n\quad(\sharp)$$
\end{theorem}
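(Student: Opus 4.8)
The plan is to prove the formula $(\sharp)$ by induction on $k$, using Proposition \ref{P5.3} to control the length $\lm(JI_{k-1}/JI_k)$ at each step. The engine is a single length recursion relating $\lm(I_{k+1}/JI_k)$ to $\lm(I_k/JI_{k-1})$, obtained by computing $\lm(I_k/JI_k)$ in two different ways.

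First I would record the inclusions that make everything a chain of submodules: since $J \inc I_1$, admissibility gives $JI_k \inc I_1 I_k \inc I_{k+1} \inc I_k$ and $JI_k \inc JI_{k-1} \inc I_1 I_{k-1}\inc I_k$. Writing $a_k = \lm(I_{k+1}/JI_k)$, additivity of length along $JI_k \inc I_{k+1}\inc I_k$ gives $\lm(I_k/JI_k) = a_k + \lm_{\mathfrak F}(k)$, while additivity along $JI_k \inc JI_{k-1}\inc I_k$ gives $\lm(I_k/JI_k) = a_{k-1} + \lm(JI_{k-1}/JI_k)$. Equating the two yields the recursion
$$a_k = a_{k-1} - \lm_{\mathfrak F}(k) + \lm(JI_{k-1}/JI_k),\qquad 1\leq k\leq n.$$
Into this I substitute the formula of Proposition \ref{P5.3}, namely $\lm(JI_{k-1}/JI_k) = \sum_{i=1}^k(-1)^{i-1}\binom{d}{i}\lm_{\mathfrak F}(k-i)$, which is available precisely because $\mathfrak F$ is $n$-standard with respect to $J$.

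For the base case $k=0$ I would use that a minimal reduction $J$ of $I$ is a parameter ideal generated by a regular sequence in the Cohen-Macaulay ring $R$, so $e_0(I) = e_0(J) = \lm(R/J)$. Then $a_0 = \lm(I_1/J) = \lm(R/J) - \lm(R/I_1) = e_0(I) - \lm_{\mathfrak F}(0)$, which is exactly $(\sharp)$ for $k=0$. For the inductive step, assuming $(\sharp)$ for $k-1$, I would compare the coefficients of each $\lm_{\mathfrak F}(m)$ in $a_k$ as produced by the recursion. The $e_0(I)$ term carries over verbatim; the coefficient of $\lm_{\mathfrak F}(k)$ comes out to $-1$, as required; and for $0 \leq m \leq k-1$ the coefficient is $(-1)^{k-m}\binom{d-1}{k-1-m} + (-1)^{k-m-1}\binom{d}{k-m}$, the first summand inherited from $a_{k-1}$ and the second contributed by Proposition \ref{P5.3}.

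The one genuinely combinatorial step, and the main (if modest) obstacle, is to simplify this last expression to the target coefficient $(-1)^{k-m+1}\binom{d-1}{k-m}$. This is exactly Pascal's rule $\binom{d}{k-m} = \binom{d-1}{k-m} + \binom{d-1}{k-m-1}$: after factoring out $(-1)^{k-m-1}$, the bracket becomes $\binom{d}{k-m} - \binom{d-1}{k-1-m} = \binom{d-1}{k-m}$, which gives the claimed formula and closes the induction. Apart from this bookkeeping, the only points requiring care are verifying the submodule inclusions above (so that the repeated length additivity is legitimate) and the multiplicity identity $e_0(I) = \lm(R/J)$ used in the base case.
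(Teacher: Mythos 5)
Your proposal is correct and follows essentially the same route as the paper's proof: induction with the recursion $\lm(I_{k+1}/JI_k) = \lm(I_k/JI_{k-1}) + \lm(JI_{k-1}/JI_k) - \lm_{\mathfrak F}(k)$, Proposition \ref{P5.3} for the middle term, the base case via $e_0(I)=\lm(R/J)$, and Pascal's identity to collapse the coefficients. Your derivation of the recursion by computing $\lm(I_k/JI_k)$ along two chains of submodules is just a more explicit justification of the length identity the paper states directly.
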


\begin{proof}
We prove this by induction on $n$. For $n = 0$, since $e_0(I) = \lm(R/J)$, $(\sharp)$ holds. 

\noindent
Assume $n \geq 1$. We have $\lm(I_{k+1}/JI_k) = e_0(I) + \sum_{i=0}^k(-1)^{i+1}\binom{d-1}{i}\lm_{\mathfrak F}(k-i)$ for $0 \leq k \leq n - 1$ by induction. Hence we need to prove ($\sharp$) only for $k = n$.

We have $\lm(I_{n+1}/JI_{n}) = $ \\ $\lm(I_{n}/JI_{n-1}) + \lm(JI_{n-1}/JI_{n}) - \lm(I_{n}/I_{n+1}) = \lm(I_{n}/JI_{n-1}) + \lm(JI_{n-1}/JI_{n}) - \lm_{\mathfrak F}(n)$. 

By induction, $\lm(I_{n}/JI_{n-1}) = e_0(I) + \sum_{i=0}^{n-1}(-1)^{i+1}\binom{d-1}{i}\lm_{\mathfrak F}(n-i-1)$ and by Proposition \ref{P5.3}, $\lm(JI_{n-1}/JI_{n}) = \sum_{i=1}^n (-1)^{i-1}\binom{d}{i}\lm_{\mathfrak F}(n-i)$ since $\mathfrak F$ is $n$-standard with respect to $J$. Combining these by using Pascal's identity $\binom{d}{i} - \binom{d-1}{i-1} = \binom{d-1}{i}$, we get ($\sharp$) for $k = n$.
\end{proof}

\begin{remark}\label{invremark} 
{\rm Let $(R,\m,\sk)$ be a Cohen-Macaulay local ring of dimension $d \geq 1$ with infinite residue field $\sk$. Assume that $\m$ is three-standard.
Then for every minimal reduction $J$ of $\m$,
$$\lm(\m^4/J\m^3) =   e_0(R) + \sum_{i=0}^3(-1)^{i+1}\binom{d-1}{i}\lm(\m^{3-i}/\m^{4-i}).$$
Consequently, $\lm(\m^4/J\m^3)$ is independent of the reduction $J$.
}\end{remark}

One can see from the following example that $n$-standardness is not necessary for a positive answer to Question \ref{Q6.1}.

\begin{example}\label{E6.1}{\rm
Let $R = \sk[[t^4,t^5,t^{11}]]$. Then $R$ is a $1$-dimensional Cohen Macaulay local ring. Consider the minimal reduction $J = (t^4)$ of $\m$,
the maximal ideal. We see that $t^{15} \in J \cap \m^3 \setminus J\m^2$, showing that $\m$ is not three-standard with respect to $J$. However, by Remark \ref{R6.1}, Question \ref{Q6.1} has a positive answer for the $\m$-adic filtration and any minimal reduction $J$ of $\m$.  
}\end{example}

\begin{remark}{\rm
Let $(R,\m,\sk)$ be a $d$-dimensional Cohen-Macaulay local ring with infinite residue field, $I$ an $\m$-primary ideal and $J$ be a minimal reduction of $I$ such that $I$ is $n$-standard with respect to $J$ for some $n \geq 1$. Further assume that $\lm(I^{n+1}/I^nJ) \leq 1$. Then by Rossi's Theorem, \cite[Theorem 3.2]{R},  we get $\depth(\gr_I(R)) \geq d - 1$. Combining this with Marley's Theorem \cite[Theorem 3.6]{M},  we can conclude that $\lm(I^{n+1}/JI^n)$ is independent of $J$ for all $n$. 
}\end{remark}

Thus $n$-standardness of $I$ with respect to $J$ together with the condition $\lm(I^{n+1}/I^nJ) \leq 1$ forces $\depth(\gr_I(R)) \geq d - 1$, which leaves us with a situation where Marley's result is applicable. We conclude this section with the following question:

\begin{question}
Let $(R,\m,\sk)$ be a Cohen-Macaulay local ring with infinite residue field, $I$ an $\m$-primary ideal and $J$ a minimal reduction of $I$. Under what other conditions does $n$-standardness force $\depth(\gr_I(R)) \geq \dim(R) - 1$?
\end{question}

\section{Three-Standardness of the Maximal Ideal: the Prime Characteristic case}\label{6.2}

If $(R,\m,\sk)$ is a Cohen-Macaulay local ring, we know that $\m$ is two-standard by Remark \ref{R5.1}(2). Example \ref{E6.1} shows that the maximal ideal is not three-standard in general. In this section, we study conditions under which the maximal is three-standard when $\char(\sk) = p > 0$. Just as in the Valabrega-Valla Theorem, the graded ring associated to the maximal ideal plays a role in the theorem.
In particular, we prove in Theorem \ref{C6.1} that if the associated graded ring is reduced and connected in codimension one, then the maximal ideal is three-standard. We use the following theorem(\cite[Theorem 5.15]{HH1}) in its proof.

\begin{theorem}[Hochster-Huneke]\label{HH}
Let $G$ be a standard graded domain over a field $\sk$ of characteristic $p > 0$. Let $\mathfrak{G}$ be 
the graded absolute integral closure of $G$. Then every sequence that is a part of a homogeneous 
system of parameters in $G$ forms a regular sequence in $\mathfrak{G}$.
\end{theorem}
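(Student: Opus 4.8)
The plan is to recognize this as the graded analogue of the Hochster--Huneke theorem that the absolute integral closure of an excellent local domain of characteristic $p$ is a balanced big Cohen--Macaulay algebra, and to prove it with the tight-closure/plus-closure machinery. Saying that $x_1,\ldots,x_i$ (part of a homogeneous system of parameters) is a regular sequence in $\mathfrak{G}$ amounts to showing that for each $j\leq i$ the colon relation is trivial: if $u\in\mathfrak{G}$ is homogeneous and $ux_j\in(x_1,\ldots,x_{j-1})\mathfrak{G}$, then $u\in(x_1,\ldots,x_{j-1})\mathfrak{G}$. Since $\mathfrak{G}=\varinjlim S$ is the direct limit of its module-finite graded domain extensions $G\subseteq S\subset\mathfrak{G}$, any such relation already lives in some finite graded domain $S$, and it suffices to solve it after passing to a further finite graded extension $S'\subseteq\mathfrak{G}$. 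Thus the whole theorem reduces to the following statement, to be applied with $S$ in place of $G$: given the homogeneous parameters and a homogeneous $u\in S$ with $ux_j\in(x_1,\ldots,x_{j-1})S$, there is a module-finite graded extension domain $S'$ of $S$ with $u\in(x_1,\ldots,x_{j-1})S'$.

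First I would invoke colon-capturing from tight closure theory. Because $S$ is module-finite over the standard graded domain $G$, the elements $x_1,\ldots,x_j$ remain part of a homogeneous system of parameters in $S$, and $S$ is an excellent graded domain of characteristic $p$ (being finitely generated over the field $\sk$, with $\char(\sk)=p$). Colon-capturing then gives $(x_1,\ldots,x_{j-1}):_S x_j\inc(x_1,\ldots,x_{j-1})^{\ast}$, the tight closure of the parameter ideal; in particular $u\in(x_1,\ldots,x_{j-1})^{\ast}$. What remains is the containment of tight closure in plus closure for parameter ideals, namely $(x_1,\ldots,x_{j-1})^{\ast}\inc(x_1,\ldots,x_{j-1})S'$ for some module-finite graded extension $S'$.

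The heart of the argument is this last containment, which I would establish by the Hochster--Huneke ``Equational Lemma.'' Unwinding the definition, $u\in(x_1,\ldots,x_{j-1})^{\ast}$ produces a nonzero homogeneous $c\in S$ with $c\,u^{q}\in(x_1,\ldots,x_{j-1})^{[q]}$ for all $q=p^{e}$. Read in the top local cohomology of the parameter ideal, these relations say precisely that the cohomology class $\eta$ attached to $u$ satisfies a finite ``polynomial in the Frobenius'' relation $F^{n}(\eta)=\sum_{t<n}c_t\,F^{t}(\eta)$ over $S$. The Equational Lemma converts such an additive, Artin--Schreier-type equation into an integral equation one root of which kills $\eta$: adjoining that root yields a module-finite extension domain $S'\subseteq\mathfrak{G}$ in which the class dies, i.e.\ $u\in(x_1,\ldots,x_{j-1})S'$. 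Since $c$, $u$, and the $x_t$ can all be taken homogeneous, the defining equation of the adjoined root is homogeneous, so the grading (now with values in $\tfrac{1}{N}\Z$) extends to $S'$ and $S'$ sits inside $\mathfrak{G}$, as required.

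The main obstacle is exactly this Equational-Lemma step: passing from the asymptotic, multiplier-$c$ tight-closure containment to an honest membership $u\in(x_1,\ldots,x_{j-1})S'$ after a single finite extension. This is where characteristic $p$ is indispensable, through the extraction of $p^{e}$-th roots and the additivity of Frobenius; the one extra subtlety beyond the classical local case is the bookkeeping needed to guarantee that every adjunction can be chosen homogeneous and that $S'$ remains a domain, so that the entire construction takes place inside the graded absolute integral closure $\mathfrak{G}$ rather than merely in an ungraded $R^{+}$.
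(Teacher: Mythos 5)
The paper itself does not prove this statement; it quotes it as Theorem 5.15 of Hochster--Huneke \cite{HH1}, so your proposal has to be measured against the original argument. In outline you have reproduced the right strategy, and it is faithful to that lineage: write $\mathfrak{G}$ as the directed union of its module-finite graded extension domains $S$, observe that any relation $ux_j\in(x_1,\ldots,x_{j-1})\mathfrak{G}$ descends to some such $S$ (which is Noetherian, excellent, and carries $x_1,\ldots,x_d$ as part of a homogeneous system of parameters), apply colon capturing to get $u\in(x_1,\ldots,x_{j-1})^{*}$, and then kill the class in a further finite homogeneous extension. The final containment of tight closure in plus closure for parameter ideals is exactly K.~E.~Smith's theorem, and the $\mathbb{Q}$-graded bookkeeping you describe (homogeneous defining equations for the adjoined roots, so that $S'$ stays a graded domain inside $\mathfrak{G}$) is the content of Section 5 of \cite{HH1}.

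There is, however, a genuine gap at the step you yourself call the heart. From $u\in(x_1,\ldots,x_{j-1})^{*}$ you get a nonzero homogeneous $c$ with $c\,u^{q}\in(x_1,\ldots,x_{j-1})^{[q]}$ for all $q=p^{e}$, i.e.\ $c\,F^{e}(\eta)=0$ for all $e$ in the top local cohomology of the parameter ideal. This does \emph{not} ``say precisely'' that $\eta$ satisfies an equation $F^{n}(\eta)=\sum_{t<n}c_{t}F^{t}(\eta)$; it says only that the Frobenius orbit of $\eta$ lies in the annihilator of $c$. To extract a $p$-polynomial equation one must prove that the $S$-span $\sum_{e}S\cdot F^{e}(\eta)$ is a finitely generated $S$-module, so that the ascending chain $S\eta+\cdots+S\,F^{e}(\eta)$ stabilizes; that finiteness is the real content, and it is not formal. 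Indeed, for a partial system of parameters the module $H^{\,j-1}_{(x_1,\ldots,x_{j-1})}(S)$ is not even Artinian when $j-1<\dim S$, and even in the top case the annihilator of $c$ in $H^{d}_{\mathfrak m}(\hat{S})$ is Artinian but not Noetherian (it is Matlis dual to $\omega/c\omega$), so the orbit lying in it gives no equation by itself. Supplying this step requires either the Hochster--Huneke induction on dimension using annihilators of the lower local cohomology modules (this is where excellence genuinely enters), or Smith's reduction to full parameter ideals over the completion. As written, your argument assumes the hypothesis of the Equational Lemma exactly where it needs to be verified; the remaining components --- the direct-limit reduction, colon capturing in the finite graded extensions, and the homogeneity argument placing $S'$ inside $\mathfrak{G}$ --- are sound.
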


In Remark \ref{R6}, we give an alternate proof of three-standardness of $\m$, assuming $\sk$ is perfect 
and $\G$ is a normal domain using the following theorem \cite[Theorem 3.1]{HV}.

\begin{theorem}[Huneke-Vraciu]\label{HV}
Let $(R,\m,\sk)$ be an excellent normal local domain where $\sk$ a perfect field of $\char(\sk) = p > 0$, 
such that $R$ has reduced associated graded ring $\gr_{\m}(R)$. If $I$ is an ideal such that $I \in \m^k$, 
then $I^* \inc I + \m^{k+1}$, where $I^*$ is the tight closure of $I$.
\end{theorem}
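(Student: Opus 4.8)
The plan is to deduce the statement from two separate containments and a structural fact about the order function. Write $\G=\gr_{\m}(R)$ and let $\operatorname{ord}(y)=\max\{\,n:y\in\m^{n}\,\}$ (finite for $y\neq 0$ since $R$ is a Noetherian local domain). Define the \emph{special tight closure} $I^{*\mathrm{sp}}$ to consist of those $x$ for which some fixed nonzero $c$ satisfies $c\,x^{q}\in\m^{[q]}I^{[q]}$ for all $q=p^{e}\gg 0$. The strategy is to prove the two containments $I^{*}\inc I+I^{*\mathrm{sp}}$ and $I^{*\mathrm{sp}}\inc\m^{k+1}$, whence $I^{*}\inc I+\m^{k+1}$. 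The mechanism that makes everything go is that reducedness of $\G$ converts $\operatorname{ord}$ into a minimum of honest valuations: $\G$ has finitely many minimal primes $\mathfrak{q}_{1},\dots,\mathfrak{q}_{t}$, each quotient $\G/\mathfrak{q}_{i}$ is a graded domain, and the associated divisorial valuation $v_{i}$ of the fraction field of $R$ satisfies $\operatorname{ord}(y)=\min_{i}v_{i}(y)$ with $v_{i}(\m)=1$. Perfectness of $\sk$ enters here to keep $\G$ geometrically reduced, so that these valuations are unaffected by the Frobenius and by the field extensions used to produce test elements.

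First I would dispose of the elementary half, $I^{*\mathrm{sp}}\inc\m^{k+1}$. Since each generator $i_{j}$ of $I$ lies in $\m^{k}$, superadditivity of $\operatorname{ord}$ gives $\operatorname{ord}(i_{j}^{q})\geq qk$, so $I^{[q]}\inc\m^{kq}$, and as $\m^{[q]}\inc\m^{q}$ we obtain $\m^{[q]}I^{[q]}\inc\m^{(k+1)q}$. Now let $x\in I^{*\mathrm{sp}}$ with witness $c\neq 0$, so $c\,x^{q}\in\m^{(k+1)q}$ for all $q\gg 0$. Evaluating the valuations $v_{i}$, which are multiplicative precisely because $\G$ is reduced, and using $v_{i}(\m)=1$, I get $v_{i}(c)+q\,v_{i}(x)=v_{i}(c\,x^{q})\geq (k+1)q$ for every $i$. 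Since $c\neq 0$ forces $v_{i}(c)<\infty$, dividing by $q$ and letting $q\to\infty$ yields $v_{i}(x)\geq k+1$ for all $i$, hence $\operatorname{ord}(x)=\min_{i}v_{i}(x)\geq k+1$, i.e.\ $x\in\m^{k+1}$. (The same valuation computation, applied to the weaker containment $c\,x^{q}\in I^{[q]}\inc\m^{kq}$, shows at the outset that every $x\in I^{*}$ already has $\operatorname{ord}(x)\geq k$.)

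The hard part is the structural decomposition $I^{*}\inc I+I^{*\mathrm{sp}}$, and I expect essentially all of the difficulty to reside there. The content is to show that, modulo $I$, every tight closure element is special: its $q$-th powers can be absorbed not merely into $I^{[q]}$ but into the smaller ideal $\m^{[q]}I^{[q]}$. The natural route is to fix a completely stable test element $c$ (available because $R$ is excellent and reduced) and to analyze the colon ideals $(\m^{[q]}I^{[q]}:_{R}x^{q})$ as $q$ varies, showing that they eventually contain $c$. Here normality of $R$ is used both to guarantee enough well-behaved test elements and to control the Frobenius, while reducedness of $\G$ — preserved under the relevant extensions exactly because $\sk$ is perfect — is what keeps the valuations $v_{i}$ multiplicative, so that the degree bookkeeping of the previous paragraph survives the passage to the test element. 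Once this decomposition is in place, the elementary estimate $I^{*\mathrm{sp}}\inc\m^{k+1}$ closes the argument with no further input.
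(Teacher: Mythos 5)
First, a point of comparison: the paper does not prove Theorem~\ref{HV} at all --- it is quoted from \cite{HV} (Theorem 3.1 there) and used as a black box in Remark~\ref{R6} --- so the only proof to measure you against is the published Huneke--Vraciu one. Your architecture (split $I^{*}$ as $I$ plus a special part $I^{*\mathrm{sp}}$, then show $I^{*\mathrm{sp}}\inc\m^{k+1}$ from reducedness of $\G$) is in fact the architecture of that published proof. But your proposal has a genuine gap, and it sits exactly where you predicted: the decomposition $I^{*}\inc I+I^{*\mathrm{sp}}$ is the main theorem of the Huneke--Vraciu paper, not a lemma, and your sketch of it is circular as stated. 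With your definition of $I^{*\mathrm{sp}}$, the assertion ``the colon ideals $(\m^{[q]}I^{[q]}:_{R}x^{q})$ eventually contain a fixed nonzero $c$'' \emph{is}, verbatim, the assertion ``$x\in I^{*\mathrm{sp}}$''; so ``analyze the colon ideals and show they contain $c$'' restates the goal rather than supplying a mechanism. The actual content --- why membership $cx^{q}\in I^{[q]}$ for all $q$ can be upgraded, after subtracting an element of $I$, to membership in the much smaller ideals $\m^{[q]}I^{[q]}$ --- is where normality, excellence, and perfectness of $\sk$ do real work in \cite{HV}, and none of that work is reproduced or replaced here.

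Your second half, $I^{*\mathrm{sp}}\inc\m^{k+1}$, is essentially right, but the structural fact it leans on is asserted rather than proved, and it is misattributed. What you actually need is that reducedness of $\G$ forces $\operatorname{ord}$ to agree with the reduced order function $\overline{\operatorname{ord}}(y)=\lim_{m}\operatorname{ord}(y^{m})/m$; the elementary input is that the leading form $y^{*}$ satisfies $(y^{*})^{m}\neq 0$ in the reduced ring $\G$, whence $\operatorname{ord}(y^{m})=m\operatorname{ord}(y)$. By Rees' theorem (applicable since an excellent normal local domain is analytically unramified), $\overline{\operatorname{ord}}$ is computed by finitely many valuations, which gives exactly the ``$\operatorname{ord}=\min_{i}v_{i}$'' you want --- but these are the Rees valuations of $\m$; identifying them with minimal primes of $\G$ is both unjustified and unnecessary, and perfectness of $\sk$ has nothing to do with this step (it is a hypothesis of the decomposition theorem). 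In fact your bookkeeping can be streamlined: $cx^{q}\in\m^{[q]}I^{[q]}\inc(\m^{k+1})^{q}$ for infinitely many $q$ says, by the valuative criterion for integral closure, that $x\in\overline{\m^{k+1}}$, and $\operatorname{ord}=\overline{\operatorname{ord}}$ says precisely that every power of $\m$ is integrally closed; together these give $x\in\m^{k+1}$ with no limit argument. So: second containment fixable, first containment missing --- and the first is the theorem.
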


We first prove a general lemma which is independent of the characteristic of the residue field. We use the following definition and Remark \ref{R5.4} in Lemma \ref{L5.0}.

\begin{definition}
We say that a ring $G$ is connected in codimension one if given any two minimal primes $\mp$ and $\mq$ in $G$, there is a sequence of minimal primes $\mp = {\mp}_1, \ldots, {\mp}_k = \mq$ such that $\hgt({\mp}_i + {\mp}_{i+1}) \leq 1$.
\end{definition}

\begin{remark}{\rm
Some of our results use the hypothesis that the graded ring under consideration, say $G$,
 is connected in codimension one. It is not completely clear when this
occurs. Obviously if $G$ is a domain,
or if $G$ satisfies Serre's condition $S_2$ (see \cite{HH2}
for this statement and for other information concerning this property), it is connected in codimension one.
On the other hand, a graded ring can be reduced and not
be connected in codimension one. For example, $G = \sk[x,y,u,v]/(xu,xv,yu,yv)$
is reduced but not connected in codimension one.
}\end{remark}

\begin{remark}\label{R5.4}{\rm Let $(x_1,\ldots,x_d)$ be a permutable system of parameters in a Noetherian ring $G$. Let $S$ be a matrix with coefficients in $G$ such that $\rank(S) = r$,
and such that some $r$ by $r$ minor of $S$ is invertible. Let $I$ be the ideal generated by the $d$ components of the vector
$(x_1,\ldots,x_d)S$. Without loss of generality, we may assume that $S$ is in its reduced row echelon form. Observe that by reordering the $x$'s if necessary, $I = (y_1,\ldots,y_r)$, where $y_i$ is of the form $x_i + \sum_{j=r+1}^d a_{ij}x_j$ for $i = 1,\ldots,r$. Thus we see that $\hgt(I) = r = \rank(S)$.
}\end{remark}

\begin{lemma}\label{L5.0}
Let $G = \oplus_{i\geq 0} G_i$ be a standard graded algebra over a field $\sk$ with $x_1,\ldots,x_d \in G_1$. Suppose that $G$ is reduced and connected
in codimension one. Let $\Min(G) = \{ \mp_1,\ldots,\mp_l \}$ be the set of minimal primes of $G$. If $H_1(x_1,\ldots,x_k; G/\mp_i)_{\leq 2} = 0$ for $i = 1, \ldots, l$, then $H_1(x_1,\ldots,x_k;G)_{\leq 2} = 0$.  
\end{lemma}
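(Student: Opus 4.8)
The plan is to reduce the claim to a statement about a single Koszul cycle and to treat the two nontrivial degrees separately. By Remark \ref{R5.2}(4) the homology $H_1(x_1,\ldots,x_k;G)$ already vanishes in every degree $< 1$, so it suffices to handle degrees $1$ and $2$. A class in $H_1(x_1,\ldots,x_k;G)_j$ is represented by a homogeneous cycle $\ov{(r_1,\ldots,r_k)}$ with $\deg r_i = j-1$ and $\sum_i r_i x_i = 0$; by Remark \ref{R5.2}(5) such a class vanishes precisely when $(r_1,\ldots,r_k) = (x_1,\ldots,x_k)S$ for an alternating (zero-diagonal, skew-symmetric) matrix $S$, whose entries are then forced to have degree $j-2$. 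I would use throughout that $G$ is reduced, so that $G \into \prod_{t=1}^l G/\mp_t$ is injective, and—since in the application the $x_i$ are leading forms of a minimal reduction—that $x_1,\ldots,x_d$ may be taken to be a permutable homogeneous system of parameters, so that Remark \ref{R5.4} applies.

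For degree one the matrix $S$ would have entries in degree $-1$, hence be zero, so the hypothesis $H_1(x_1,\ldots,x_k;G/\mp_t)_1 = 0$ says exactly that each $r_i$ maps to $0$ in every $G/\mp_t$; reducedness then gives $r_i \in \bigcap_t \mp_t = (0)$, and the class is zero. The degree-two case is the heart of the matter: here each $S$ has entries in $G_0 = \sk$, i.e. is an honest alternating scalar matrix. First I would apply the hypothesis in each quotient: for every minimal prime $\mp_t$ there is an alternating scalar matrix $S^{(t)}$ over $\sk$ with $(\ov{r_1},\ldots,\ov{r_k}) = (\ov{x_1},\ldots,\ov{x_k})S^{(t)}$ in $G/\mp_t$. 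The goal is to show that all the $S^{(t)}$ coincide, for then $(r_1,\ldots,r_k) - (x_1,\ldots,x_k)S^{(t)}$ dies in every $G/\mp_t$, hence by reducedness in $G$, exhibiting the cycle as a boundary.

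To compare two minimal primes $\mp$ and $\mq$ that are adjacent in the sense $\hgt(\mp+\mq)\le 1$, I would reduce both relations into $G/(\mp+\mq)$ and subtract, obtaining $(x_1,\ldots,x_k)T = 0$ in $G/(\mp+\mq)$, where $T = S^{(\mp)} - S^{(\mq)}$ is again an alternating scalar matrix; thus the ideal generated by the entries of $(x_1,\ldots,x_k)T$ is contained in $\mp+\mq$. Padding $T$ with zero rows and columns to a $d\times d$ alternating matrix of the same rank $r$ (which possesses an invertible $r\times r$ minor over $\sk$), Remark \ref{R5.4} computes the height of this ideal to be exactly $r$. Since the ideal lies in $\mp+\mq$, monotonicity of height forces $r = \hgt(\mp+\mq) \le 1$. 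An alternating matrix has even rank in every characteristic, so $r = 0$ and $T = 0$, i.e. $S^{(\mp)} = S^{(\mq)}$.

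Connectedness in codimension one then propagates this equality along chains of adjacent minimal primes, yielding a single scalar matrix $S$ with $S^{(t)} = S$ for all $t$, which by the reducedness step above completes the argument. I expect the main obstacle to be exactly this patching: marrying the linear-algebra input (even rank of alternating matrices) to the commutative-algebra input (the rank-equals-height statement of Remark \ref{R5.4} together with the chain of minimal primes supplied by connectedness) so that the matrices can be made to agree simultaneously across all of $\Min(G)$.
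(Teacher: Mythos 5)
Your proposal is correct and follows essentially the same route as the paper's proof: reduce modulo each minimal prime to obtain alternating scalar matrices $S^{(t)}$, use Remark \ref{R5.4} together with $\hgt(\mp+\mq)\le 1$ for adjacent minimal primes to force $S^{(\mp)}=S^{(\mq)}$, propagate along the chain given by connectedness in codimension one, and finish with reducedness. The only differences are cosmetic: you split degrees $1$ and $2$ where the paper treats $H_1(\cdot)_{\le 2}$ at once, and you invoke evenness of the rank of an alternating matrix explicitly where the paper exhibits an invertible $2\times 2$ principal minor.
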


\begin{proof}
Let $\ov{(r_1,\ldots,r_k)} \in H_1(x_1,\ldots,x_k;G)_{\leq 2}$, i.e., $\sum_{i=1}^k r_ix_i = 0$ in $G$ with $\deg(r_i) < 2$. Let $\ \bar{}\ $ denote going modulo $\mp_i$. Since $\ov{(\bar{r}_1,\ldots,\bar{r}_k)} \in H_1(x_1,\ldots,x_k; G/\mp_i)_{\leq 2} = 0$ for each $i$, we can write $(r_1,\ldots,r_k) = (x_1,\ldots,x_k) S_i + ({p_i}_1,\ldots,{p_i}_k)$, where $S_i$ (or we write $S_{\mp_i}$ if we need to label the prime) is a skew-symmetric matrix with entries in $G_0 = \sk$ and ${p_i}_n \in \mp_i$, $n = 1, \ldots, l$.

Now since $G$ is connected in codimension one, without loss of generality we may assume the minimal primes $\{ \mp_1,\ldots,\mp_l \}$ are ordered
so that $\hgt(\mp_i + \mp_{i+1})\leq 1$ for $1\leq i\leq l-1$.  

\noindent
\underline{Claim:} ${S_i} = {S_{i+1}}$ for all $i$.

\noindent
 
If ${S_i} \neq {S_{i+1}}$, then ${S_i}- {S_{i+1}}$ is a non-zero skew-symmetric matrix, i.e., it has a $2 \times 2$ minor of the form
$\left(\begin{array}{cc}0&r\\r&0\end{array}\right)$ where $r \neq 0$. Thus $\rank({S_i} - {S_{i+1}}) \geq 2$. 
Since $(x_1,\ldots,x_k)({S_i} - {S_{i+1}}) \in {\mp_i} + {\mp_{i+1}}$, we see by Remark \ref{R5.4} that $\rank({S_i} - {S_{i+1}}) \geq 2$ forces
$\hgt({\mp_i} + {\mp_{i+1}}) \geq 2$. This contradiction proves the claim.

Hence for any two minimal primes $\mp$ and $\mq$ we have $S_\mp = S_\mq$. Since $(p_1,\ldots,p_n) - (q_1,\ldots,q_k) = (x_1,\ldots,x_k) (S_\mp - S_\mq)$, this forces $({p_m}_1,\ldots,{p_m}_k)$ $= ({p_n}_1,\ldots,{p_n}_k)$ for $1\leq m,n \leq l$. 

Let $S_i = S$ and $({p_i}_1,\ldots,{p_i}_k) = (p_1,\ldots,p_n)$, $i = 1,\ldots, l$. Thus $p_n \in \cap_{i=1}^l \mp_i = 0$ for $n = 1, \ldots, k,$ since $G$ is reduced. Therefore we have $(r_1,\ldots,r_k) = (x_1,\ldots,x_k) S$. This proves that $\ov{(r_1,\ldots,r_k)} =0$ in $H_1(x_1,\ldots,x_k; G)$ proving the lemma.
\end{proof}

\begin{theorem}\label{C6.1}
Let $(R,\m,\sk)$ be a $d$-dimensional Cohen-Macaulay local ring of $\char(\sk) = p > 0$. Let $\G = \gr_R(\m) = R/\m \oplus \m/\m^2 \oplus \cdots$ 
be the graded ring associated to the maximal ideal. If $\G$ is reduced and connected in codimension one, then $\m$ is three-standard.
\end{theorem}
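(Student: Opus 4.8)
The plan is to convert three-standardness into a Koszul-homology vanishing statement on $\G$, push that vanishing down to the minimal components of $\G$ using the connectedness hypothesis, and then dispatch the domain case with the Hochster--Huneke theorem on the graded absolute integral closure. By Proposition \ref{P5.2} applied to the $\m$-adic filtration, $\m$ is three-standard with respect to a minimal reduction $J=(x_1,\dots,x_d)$ if and only if $H_1(x'_1,\dots,x'_k;\G)_j=0$ for all $j<3$ and $1\le k\le d$, where $x'_i$ is the leading form of $x_i$. Since $H_1(\cdots)_0=0$ automatically by Remark \ref{R5.2}(4), the real content is the vanishing of $H_1(x'_1,\dots,x'_k;\G)_{\le 2}$ for $1\le k\le d$, and since this must hold for every $J$ while $\sk$ is infinite, it suffices to treat one minimal reduction at a time, with $x'_1,\dots,x'_d$ a homogeneous system of parameters of $\G$.

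Next I invoke Lemma \ref{L5.0}: because $\G$ is reduced and connected in codimension one, it is enough to prove $H_1(x'_1,\dots,x'_k;\G/\mp_i)_{\le 2}=0$ for every minimal prime $\mp_i$ of $\G$ and every $1\le k\le d$. Here I use that a Cohen--Macaulay local ring is quasi-unmixed, so that $\G=\gr_\m(R)$ is equidimensional of dimension $d$; hence each $G:=\G/\mp_i$ is a standard graded domain over $\sk$ with $\dim G=d$, and the images of $x'_1,\dots,x'_d$ form a homogeneous system of parameters of $G$. Consequently every initial segment $x'_1,\dots,x'_k$ with $k\le d$ is part of a homogeneous system of parameters of $G$.

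Now fix such a $G$ and let $\mathfrak G$ be its graded absolute integral closure. By the Hochster--Huneke theorem (Theorem \ref{HH}), $x'_1,\dots,x'_k$ is a regular sequence in $\mathfrak G$, so that $(x'_1,\dots,x'_{k-1})\mathfrak G:_{\mathfrak G}x'_k=(x'_1,\dots,x'_{k-1})\mathfrak G$. By Proposition \ref{P5.1} with $n=2$ it is enough to verify $(x'_1,\dots,x'_{k-1}):_G x'_k\subseteq (x'_1,\dots,x'_{k-1})G+G_{\ge 2}$, so I take a homogeneous $z$ in this colon. If $\deg z\ge 2$ there is nothing to prove. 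If $\deg z=0$ then $z\in\sk$ and $z x'_k\in(x'_1,\dots,x'_{k-1})$ forces $z=0$, since $x'_k\notin(x'_1,\dots,x'_{k-1})$ in the domain $G$ (the quotient $G/(x'_1,\dots,x'_{k-1})$ has positive dimension, so $x'_k$ is a nonzero parameter there). The remaining case $\deg z=1$ is the heart of the matter: regularity in $\mathfrak G$ yields $z=\sum_{i<k}a_i x'_i$ with $a_i\in\mathfrak G_0$, and I must descend this relation over the extension $\sk\subseteq\mathfrak G_0$ to conclude $z\in\mathrm{span}_\sk(x'_1,\dots,x'_{k-1})\subseteq(x'_1,\dots,x'_{k-1})G$.

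I expect this descent to be the main obstacle. It amounts to showing that $\sk$-linearly independent linear forms of $G$ remain independent over $\mathfrak G_0=\ov{\sk}$ in degree one of $\mathfrak G$, equivalently that enlarging $G$ to $\mathfrak G$ creates no new degree-one elements of $(x'_1,\dots,x'_{k-1})\cap G$. When $G$ is geometrically integral this follows at once from linear disjointness of $G$ and $\ov{\sk}$ over $\sk$; in general the field of constants of $G$ can be strictly larger than $\sk$ and genuine new relations appear. I would therefore secure this step by a preliminary reduction: replacing $R$ by a faithfully flat local extension that enlarges the residue field to an algebraically closed field while preserving Cohen--Macaulayness and the reducedness and codimension-one connectedness of $\G$, so that $\mathfrak G_0=\sk$ and the coefficients $a_i$ lie in $\sk$ outright. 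Granting this reduction, the three degree cases close out the colon inclusion; assembling over all $k$ and all $\mp_i$ and applying Lemma \ref{L5.0} gives $H_1(x'_1,\dots,x'_k;\G)_{\le 2}=0$, hence the three-standardness of $\m$.
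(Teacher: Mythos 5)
Your proposal follows the paper's own route step for step: Proposition \ref{P5.2} (together with Corollary \ref{C5.1}) converts three-standardness into the vanishing of $H_1(x'_1,\ldots,x'_k;\G)_{\leq 2}$, Lemma \ref{L5.0} reduces this to the domains $\G/\mp_i$, and Theorem \ref{HH} makes every low-degree relation Koszul over the graded absolute integral closure $\mathfrak{G}$, with skew-symmetric coefficient matrix whose entries lie in $\mathfrak{G}_0\inc\bar\sk$; this is exactly the paper's Lemma \ref{T6.1} (you are in fact more careful than the paper in checking equidimensionality of $\G$, so that the $x'_i$ remain a system of parameters modulo each minimal prime). Where you part ways is the descent of those coefficients from $\bar\sk$ to $\sk$: the paper disposes of this in one line, asserting $H_1(x'_1,\ldots,x'_k;\G/\mp)_2\otimes_\sk\bar\sk=0$ and concluding by faithful flatness of $\sk\rar\bar\sk$, whereas you flag it as the main obstacle and propose to remove it by an unproven preliminary reduction to the case of an algebraically closed residue field.

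That reduction is a genuine gap, and it cannot be carried out as described, because the hypotheses of the theorem are not geometric. The harmless half does work: there is a flat local Cohen--Macaulay extension $(R',\m')$ with $\m'=\m R'$ and residue field $\bar\sk$, one has $\gr_{\m'}(R')\simeq\G\otimes_\sk\bar\sk$, and three-standardness descends from $\m'$ to $\m$ by faithful flatness. What fails is the ascent of the hypotheses along $\G\mapsto\G\otimes_\sk\bar\sk$. Since $\sk$ is only assumed infinite of characteristic $p$, it may be imperfect, e.g.\ $\sk=\mathbb{F}_p(t)$: then reducedness need not ascend ($\sk[x,y]/(x^p-ty^p)$ is a domain whose base change to $\bar\sk$ is non-reduced). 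Even when reducedness ascends, connectedness in codimension one can fail: for $p$ odd and $L=\sk(\sqrt t)$, the ring $G=\sk\oplus\bigoplus_{n\geq 1}L[u,v]_n\inc L[u,v]$ is a standard graded domain over $\sk$ with linear system of parameters $u,v$, yet $G\otimes_\sk\bar\sk\simeq\{(f,g)\in\bar\sk[u,v]\times\bar\sk[u,v]\,:\,f(0,0)=g(0,0)\}$ is two planes meeting in a point, reduced but not connected in codimension one. The same $G$ shows that the obstacle you isolated is real, not a formality: $(\sqrt t\,v)u-(\sqrt t\,u)v=0$ is a relation whose class in $H_1(u,v;G)_2$ is nonzero, because the Koszul boundaries in that internal degree have coefficients in $G_0=\sk$, even though it becomes Koszul over $\mathfrak{G}$ with coefficient $\sqrt t\in\mathfrak{G}_0$. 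So the implication you would need---Koszulness over $\mathfrak{G}$ with entries in $\bar\sk$ forces vanishing of $H_1$ over $G$---is false without further hypotheses; in particular this $G$ satisfies all hypotheses of Lemma \ref{T6.1} but not its conclusion, so the delicacy you sensed is present in the paper's own treatment as well, and it is no accident that the equicharacteristic-zero counterpart, Theorem \ref{C6.3}, assumes $\G$ is an \emph{absolute} domain. As written, then, your proof is incomplete at exactly the step you flagged: to close the degree-one case you must either impose the hypotheses geometrically (i.e.\ after $-\otimes_\sk\bar\sk$), or find an argument that genuinely uses that $\G$ is the associated graded ring of a Cohen--Macaulay local ring; neither is supplied by the proposal.
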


\begin{proof}
 Let $J = (x_1,\ldots,x_d)$ be a minimal reduction of $\m$. By Proposition \ref{P5.2} and Corollary \ref{C5.1}, it is enough to show that $H_1(x'_1,\ldots,x'_k; \G)_2 = 0$ for $1 \leq k \leq d$. 
This follows from the following lemma. \end{proof}

\begin{lemma}\label{T6.1}
Let $G$ be a standard graded $\sk$-algebra where $\sk$ is a field of prime characteristic $p > 0$. Let $x_1,\ldots,x_d$ be a linear system of parameters in $G$. If $G$ is reduced and connected in codimension one, then $H_1(x_1,\ldots,x_k;G)_2 = 0$ for $1 \leq k \leq d$. 
\end{lemma}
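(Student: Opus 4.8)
The plan is to reduce to the case of a graded domain via Lemma \ref{L5.0}, and then transport the given Koszul relation into the graded absolute integral closure, where the Hochster--Huneke theorem (Theorem \ref{HH}) furnishes a regular sequence; the relation becomes a Koszul boundary there, and the task is to descend the boundary back to $G$ with coefficients in $\sk$.

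First I would record that the hypotheses force $G$ to be equidimensional. Indeed, $G$ is standard graded over a field, hence catenary, and if $\mp,\mq$ are minimal primes with $\hgt(\mp+\mq)\le 1$, pick $P$ minimal over $\mp+\mq$ with $\hgt_G P=\hgt(\mp+\mq)$; since $P\supsetneq\mp$ we get $\hgt_G P=1$, and passing to the catenary domains $G/\mp$, $G/\mq$ gives $\dim G/\mp-1=\dim G/P=\dim G/\mq-1$. Connectedness in codimension one then propagates this equality along a chain of minimal primes, so $\dim G/\mp=d$ for every minimal prime $\mp$. Consequently, for each $\mp$ the domain $D:=G/\mp$ has dimension $d$, and the images $\bar x_1,\ldots,\bar x_d$ generate a $D_+$-primary ideal (their radical is $D_+$ because $(x_1,\ldots,x_d)$ is $G_+$-primary); thus $\bar x_1,\ldots,\bar x_d$ is a homogeneous system of parameters of $D$, and for every $k\le d$ the segment $\bar x_1,\ldots,\bar x_k$ is part of one. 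By Lemma \ref{L5.0} it then suffices to prove $H_1(\bar x_1,\ldots,\bar x_k;D)_{\le 2}=0$ for each minimal prime $\mp$.

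Now fix $\mp$ and let $\mathfrak D$ be the graded absolute integral closure of the standard graded domain $D$. By Theorem \ref{HH}, $\bar x_1,\ldots,\bar x_k$ forms a regular sequence in $\mathfrak D$, so the Koszul homology $H_1(\bar x_1,\ldots,\bar x_k;\mathfrak D)$ vanishes. Take a homogeneous cycle $\ov{(r_1,\ldots,r_k)}\in H_1(\bar x_1,\ldots,\bar x_k;D)_j$ with $j\le 2$; then $r_i\in D_{j-1}$ and $\sum_i r_i\bar x_i=0$, and this relation persists in $\mathfrak D$ because $D\into\mathfrak D$. Since $H_1(\ldots;\mathfrak D)=0$, Remark \ref{R5.2}(5) produces a skew-symmetric matrix $T$ over $\mathfrak D$, which may be taken homogeneous with entries in $\mathfrak D_{j-2}$, satisfying $(r_1,\ldots,r_k)=(\bar x_1,\ldots,\bar x_k)T$. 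For $j\le 1$ we have $\mathfrak D_{j-2}=0$, so $T=0$ and the cycle already vanishes (this recovers, in degree one, the linear independence of $\bar x_1,\ldots,\bar x_k$). For $j=2$ the entries of $T$ lie in $\mathfrak D_0=\bar\sk$, the algebraic closure of $\sk$. The requirement that a skew-symmetric matrix satisfy $(r_1,\ldots,r_k)=(\bar x_1,\ldots,\bar x_k)T$ is a system of $\sk$-linear equations in the entries of $T$, with coefficients the structure constants of $D$ and the coordinates of the $r_i$, all lying in $\sk$; as it has a solution over $\bar\sk$ it has one over $\sk$. That $\sk$-valued solution exhibits $\ov{(r_1,\ldots,r_k)}$ as a Koszul boundary over $D$, giving $H_1(\bar x_1,\ldots,\bar x_k;D)_{\le 2}=0$. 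Applying Lemma \ref{L5.0} yields $H_1(x_1,\ldots,x_k;G)_{\le 2}=0$, in particular in degree two, for $1\le k\le d$.

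I expect the main obstacle to be the passage through the non-Noetherian ring $\mathfrak D$ together with the descent of coefficients: one must verify that the boundary guaranteed by the regular sequence can be chosen homogeneous of the correct degree and, crucially, that its degree-zero coefficients, a priori only in $\bar\sk$, may be replaced by coefficients in $\sk$ — which is precisely where the $\sk$-linearity of the defining relations is used. A secondary but essential point is the equidimensionality of $G$ established at the outset, without which the images $\bar x_i$ would not form part of a system of parameters modulo a minimal prime, and the degree-one homology of $G/\mp$ (hence the hypothesis of Lemma \ref{L5.0}) could fail.
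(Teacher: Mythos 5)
Your proposal is correct and follows essentially the same route as the paper's proof: reduce to the domains $G/\mp$ via Lemma \ref{L5.0}, pass to the graded absolute integral closure where Theorem \ref{HH} makes the images of $x_1,\ldots,x_k$ a regular sequence, use the grading to force the skew-symmetric matrix to have degree-zero entries (algebraic over $\sk$), and then descend from $\bar\sk$ to $\sk$ --- the paper phrases this last step as $H_1 \otimes_\sk \bar\sk = 0$ implying $H_1 = 0$, while you phrase it as solvability of a $\sk$-linear system, which is the same idea. Your explicit verification that connectedness in codimension one forces $G$ to be equidimensional (so that the $\bar x_i$ remain part of a homogeneous system of parameters modulo each minimal prime, as Theorem \ref{HH} requires, and so that the degree-one vanishing needed for Lemma \ref{L5.0} holds) is a point the paper leaves implicit, and is a welcome addition.
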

\begin{proof}
Let $\Min(G) = \{ \mp_1,\ldots,\mp_l \}$ be the set of minimal primes of $G$. By Lemma \ref{L5.0}, it is enough to show that $H_1(x_1,\ldots,x_k; G/\mp_i)_2 = 0$ for $i=1,\ldots,l$. 

Let $\mp$ be a minimal prime of $G$. Let $\overline{(r_1,\ldots,r_k)} \in  H_1(x_1,\ldots,x_k; G/\mp)_2$. Then we have $\sum_{i=1}^j r_i x_i = 0$ in $G/\mp$, where $\deg(r_i) = 1$.

Let $\mathfrak{G} = (G/\mp)^{\gr +}$ be the graded absolute integral closure of $G/\mp$. Then $\mathfrak{G}$ is a big Cohen-Macaulay $G/\mp$-algebra by Theorem \ref{HH}.
Hence $x_1,\ldots,x_d$ form a regular sequence in $\mathfrak{G}$. Therefore the only relations on $x_1,\ldots,x_k$ are the Koszul relations, i.e., we can write $(r_1,\ldots,r_k) = (x_1,\ldots,x_k) S_{k \times k}$, 
where $S$ is a $k \times k$ skew-symmetric matrix with entries in $\mathfrak{G}$. By degree arguments, we can assume that
the entries of $S$ are units in $\mathfrak{G}$, i.e., the entries of $S$ are in $\bar \sk$, an algebraic closure of $\sk$.
Thus $H_1(x_1,\ldots,x_k; G/\mp)_2 \otimes_\sk \bar \sk = 0$, which shows that $H_1(x_1,\ldots,x_k; G/\mp)_2 = 0$ finishing the proof.\footnote{We
thank Mark Walker for helping us to remove our original assumption that $\sk$ is algebraically closed.}
\end{proof}

\begin{remark}\label{R6}{\rm
When $\sk$ is perfect, $\char(\sk) = p > 0$ and $G$ is a normal domain, one can show that $H_1(x_1,\ldots,x_k;G)_{\leq 2} = 0$ for $1 \leq k \leq d$ without appealing to the absolute integral closure. This can be done as follows:

By the colon-capturing property of tight closure, (cf. \cite[Theorem 3.1]{H2}), we see that for $1 \leq k \leq d$, $(x_1,\ldots,x_{k-1}):_G x_k \inc (x_1,\ldots,x_{k-1})^*$. 

Let $\m = G_{>0}$. Now by Theorem \ref{HV}, since $(x_1,\ldots,x_{k-1}) \inc \m$ but not in  $\m^2$, we get $(x_1,\ldots,x_{k-1})^* \inc (x_1,\ldots,x_{k-1}) + \m^2$ for $1 \leq k \leq d$. 

Thus $(x_1,\ldots,x_{k-1}):_G x_k \inc(x_1,\ldots,x_{k-1}) + \m^2$ for $1 \leq k \leq d$. Hence by Proposition \ref{P5.1}, $H_1(x_1,\ldots,x_k;G)_{\leq 2} = 0$ for $1 \leq k \leq d$. 
}\end{remark}

We see in Lemma \ref{T6.1} that if $G$ is reduced and connected in codimension one, then for $1 \leq k \leq d$, $H_1(x_1,\ldots,x_k;G)_{\leq 2} = 0$.  However, it is possible that $H_1(x_1,\ldots,x_k;G)_{3} \neq 0$ even when $G$ is a normal domain as can be seen from the following example.

\begin{example}\label{E6.0}{\rm
Let $R = \sk[X,Y,Z]/(X^3+Y^3+Z^3)$, where $\sk$ is a perfect field such that $\char(\sk) \neq 3$ and let $G = R[(x,y,z)t]$ be the Rees ring associated to the homogeneous maximal ideal $\m = (x,y,z)$. Then $G$ is a normal domain. 

One can see using a computer algebra package (we use Macaulay2) that a presentation for $G$ is the following: $G \simeq \sk[X,Y,Z,U,V,W]/I$ where $I = (X^3+Y^3+Z^3,X^2U+Y^2V+Z^2W,XU^2+YV^2+ZW^2,U^3+V^3+W^3,YW-ZV,XW-ZU,XV-YU)$. We use lower case letters to denote elements of $G$. 

Consider the linear system of parameters $f_1 = x$,$f_2 = y+u$ and $f_3 = z+v$. Then $\ov{(x^2-yv+w^2,y^2-zw,z^2)} \in H_1(\underline{f})_3$ is a non-zero element, showing that $H_1(\underline{f})_3 \neq 0$.

As a consequence of Proposition \ref{P5.2}, one observes that if $G$ is the associated graded ring of a Cohen-Macaulay local ring $(R,\m)$, then $J \cap \m^4 \neq J\m^3$, where $J = (f_1,f_2,f_3)$ is a minimal reduction of $\m$ such that the leading forms of $f_1$, $f_2$ and $f_3$ in $G$ are $x$, $y+u$ and $z+v$ respectively.
}\end{example}

We can now prove the promised generalization of Puthenpurakal's theorem in positive characteristic.

\begin{theorem}\label{invariancep}
Let $(R,\m,\sk)$ be a Cohen-Macaulay local ring of dimension $d \geq 1$ with infinite residue field $\sk$. Assume that the characteristic of
$\sk$ is positive and the associated graded ring $\G = \gr_\m(R)$ is reduced and connected in codimension one.
If $J$ is a minimal reduction of $\m$, then
$$\lm(\m^4/J\m^3) =   e_0(R) + \sum_{i=0}^3(-1)^{i+1}\binom{d-1}{i}\lm(\m^{3-i}/\m^{4-i}).$$
Consequently, $\lm(\m^4/J\m^3)$ is independent of the reduction $J$.
\end{theorem}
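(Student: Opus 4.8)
The plan is to recognize that Theorem~\ref{invariancep} is a formal consequence of two results already established in this paper: the three-standardness statement Theorem~\ref{C6.1}, which carries all of the positive-characteristic input, and the length formula Theorem~\ref{T5.1}, specialized to the $\m$-adic filtration. No genuinely new argument should be required; the substance has been isolated upstream, and what remains is to check that the hypotheses line up and then to read off the right value of the index.

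First I would verify that the standing hypotheses match those of Theorem~\ref{C6.1}: we are given $\char(\sk) = p > 0$ and that $\G = \gr_\m(R)$ is reduced and connected in codimension one. Hence Theorem~\ref{C6.1} applies and shows that $\m$ is three-standard, i.e.\ $J \cap \m^3 = J\m^2$ for every minimal reduction $J$ of $\m$ (the infinite residue field guarantees that such $J$ exist). Combining this with the elementary facts recorded in Remark~\ref{R5.1} --- that $\m$ is always one-standard, and, being the maximal ideal of a Cohen-Macaulay ring, two-standard, so that $J \cap \m = J$ and $J \cap \m^2 = J\m$ --- I would conclude that the $\m$-adic filtration $\mathfrak F = \{\m^n\}$ is $3$-standard with respect to every minimal reduction $J$.

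Next I would feed this into Theorem~\ref{T5.1} with $I = \m$, with $\mathfrak F$ the $\m$-adic filtration (so $I_n = \m^n$ and $\lm_{\mathfrak F}(j) = \lm(\m^j/\m^{j+1})$), with $n = 3$, and extract the case $k = 3$. This yields
$$\lm(\m^4/J\m^3) =   e_0(R) + \sum_{i=0}^3(-1)^{i+1}\binom{d-1}{i}\lm(\m^{3-i}/\m^{4-i}),$$
which is precisely the asserted identity; this is exactly the bookkeeping already summarized in Remark~\ref{invremark}. Since every term on the right-hand side depends only on $R$, $\m$, and $d$, and not at all on the choice of $J$, the independence of $\lm(\m^4/J\m^3)$ from the minimal reduction is immediate.

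The only point needing care in this deduction is confirming that the $3$-standardness hypothesis of Theorem~\ref{T5.1} really holds for all of $k = 1, 2, 3$: the cases $k = 1, 2$ are automatic, while the case $k = 3$ is exactly where the characteristic-$p$ machinery (the Hochster-Huneke theorem on the graded absolute integral closure, applied inside Theorem~\ref{C6.1}) does its work. Thus I expect no real obstacle within the present argument; the hard part lies upstream in proving Theorem~\ref{C6.1}, and once that is granted this theorem is a direct corollary.
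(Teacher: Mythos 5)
Your proposal is correct and matches the paper's own proof exactly: the paper also deduces the theorem immediately from Theorem~\ref{C6.1} (three-standardness in positive characteristic) combined with Remark~\ref{invremark}, which is precisely the specialization of Theorem~\ref{T5.1} to the $\m$-adic filtration with $n=k=3$ that you carry out. Your additional check that $1$- and $2$-standardness hold automatically is harmless but already subsumed in the definition of three-standardness.
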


\begin{proof} This is immediate from Theorem~\ref{C6.1} together with Remark~\ref{invremark}. \end{proof}

\section{The Characteristic Zero Case: Reduction to Prime Characteristic}\label{6.4}

In this section, we prove an analogue of Lemma \ref{T6.1} in the case when the residue field has characteristic zero. We use the method 
 of reduction to characteristic $p$. Our main source for this technique are sections 2.1 and 2.3 of \cite{HH3}.

\noindent
We begin by recalling the following definition (Definition 2.3.1) from \cite{HH3}.
\begin{definition}
We say that a \sk-algebra $R$ is an absolute domain if $R \otimes_{\sk} \bar \sk$ is a domain, where $\bar \sk$ is the algebraic closure of $\sk$. We say that a prime ideal $\mp \inc R$ is an absolute prime if $R/\mp$ is an absolute domain.
\end{definition}

\begin{remark}{\rm
By definition, any graded domain over an algebraically closed field is an absolute domain. Furthermore, if $G$ is an absolute domain of equicharacteristic zero, then by Theorem 2.3.6(c) in \cite{HH3}, almost all the graded rings obtained from $G$ by the process of reduction to prime characteristic are also absolute domains.
}\end{remark}

\begin{setup}\label{S6.1}
Let $\sk$ be a field of characteristic 0. Let $G$ be a standard graded $\sk$-algebra and $x_1,\ldots,x_d \in G$ be a linear system of parameters such that for each $k = 1,\ldots,d$, $$(x_1,\ldots,x_{k-1}):_G x_k \inc (x_1,\ldots,x_{k-1}) + G_{\geq n}.$$
\end{setup}

We will now apply the method of reduction to prime characteristic to this setup. The following lemma \cite[2.1.4]{HH3} plays a key role in this process.

\begin{lemma}[Generic Freeness]
Let $A$ be a Noetherian domain, $R$ a finitely generated $A$-algebra, $S$ a finitely generated $R$-algebra, $W$ a finitely generated $S$-module, $M$ a finitely generated $R$-submodule of $W$ and $N$ a finitely generated $A$-submodule of $W$. Let $V = W/(M+N)$. Then there exists an element $a \in A \setminus \{0\}$ such that $V_a$ is free over $A_a$.
\end{lemma}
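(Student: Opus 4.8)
The plan is to prove this as a standard \emph{generic freeness} statement by reducing to polynomial algebras and then inducting on the number of algebra generators appearing in the chain $A \subseteq R \subseteq S$, with the inductive step controlled by a filtration of $W$ by finitely generated submodules. Since finite generation of $M$ over $R$, of $W$ over $S$, and the $A$-module structure of $V$ are all insensitive to replacing $R$ and $S$ by polynomial rings surjecting onto them, I may assume $R = A[X_1,\ldots,X_m]$ and $S = R[Y_1,\ldots,Y_n]$. I then induct on $n$, and inside the base case $n = 0$ (where $S = R$) on $m$; the ultimate base case $m = n = 0$ is the classical fact that a finitely generated module over the Noetherian domain $A$ is generically free. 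For that base case I would tensor with $K = \mathrm{Frac}(A)$ to get a finite-dimensional $K$-vector space, lift a $K$-basis to a map $A^r \to V$, and invert a single $a \in A \setminus \{0\}$ clearing the finitely generated torsion kernel and cokernel, so that $V_a \cong A_a^r$ is free.

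For the inductive step I would peel off the last generator, say $Y = Y_n$, writing $S = T[Y]$ with $T = R[Y_1,\ldots,Y_{n-1}]$ (or $R = R'[X_m]$ in the inner induction), and filter $W$ by the finitely generated $T$-submodules $W_d$ generated by $\{Y^i w_j : i \le d\}$, where the $w_j$ generate $W$ over $S$. These satisfy $W_d \subseteq W_{d+1}$, $\bigcup_d W_d = W$, and $Y W_d \subseteq W_{d+1}$, so $\bigoplus_d W_d/W_{d-1}$ is a finitely generated graded module over the polynomial ring $T[\bar Y]$. Because $M$ and $N$ are finitely generated over $R$ and $A$ respectively, both lie in some $W_{d_0}$; the successive quotients of the induced filtration of $V = W/(M+N)$ are then the bottom piece $W_{d_0}/(M+N)$ together with the graded pieces $G_d = W_d/W_{d-1}$ for $d > d_0$. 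The bottom piece is again of the form (finitely generated $T$-module)$/$((finitely generated $R$-submodule) $+$ (finitely generated $A$-submodule)), so the inductive hypothesis for $T$ applies to it directly; this is precisely why the statement is formulated with both an $R$-submodule $M$ and an $A$-submodule $N$.

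The crucial point, and the step I expect to be the main obstacle, is that the filtration is infinite, so I must avoid inverting infinitely many elements of $A$. I would resolve this using the finitely generated graded $T[\bar Y]$-module structure: the module $\bigoplus_d G_d$ is generated in bounded degree, so multiplication by $\bar Y$ gives surjections $G_d \to G_{d+1}$ for $d \gg 0$, and since the kernel submodule $(0 :_{\bigoplus G_d} \bar Y)$ is itself finitely generated and annihilated by $\bar Y$, it is concentrated in bounded degree. Hence $G_d \to G_{d+1}$ is an isomorphism for all $d \gg 0$, and only finitely many isomorphism types of $G_d$ occur: the exceptional ones for $d_0 < d \le D$ and a single stable type $G_\infty$, each a finitely generated $T$-module to which the inductive hypothesis applies (with trivial submodules). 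Taking $a$ to be the product of the finitely many nonzero elements of $A$ thereby produced, over $A_a$ every successive quotient of the filtration of $V_a$ is free, hence projective, so each extension splits; assembling these splittings compatibly exhibits $V_a$ as a direct sum of free $A_a$-modules, and therefore free. This completes the argument, the heart being the degree-stabilization of the graded pieces that converts an infinite filtration into finitely many applications of the inductive hypothesis.
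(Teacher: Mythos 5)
The paper itself contains no proof of this lemma: it is quoted directly from Hochster--Huneke \cite[2.1.4]{HH3} (a standard strengthening of Grothendieck's generic freeness), so there is no internal argument to compare yours against; I am judging your proposal on its own merits. Your strategy is in fact the standard proof of this statement: reduce to polynomial rings, induct on the number of variables, filter $W$ by the $T$-submodules $W_d$ generated by $\{Y^i w_j : i \le d\}$, use finite generation of $\bigoplus_d W_d/W_{d-1}$ over $T[\bar{Y}]$ to show the graded pieces stabilize, handle the bottom piece and the finitely many isomorphism types of graded pieces by induction, and conclude by splitting an increasing filtration with free successive quotients. For the outer induction (peeling $Y_n$ off $S$) every step you describe is correct: $M+N \subseteq W_{d_0}$ does hold there because $R \subseteq T$ and $A \subseteq T$, so $W_{d_0}$ absorbs the $R$-span and $A$-span of the finitely many generators; the surjectivity of $\bar{Y}\colon G_d \to G_{d+1}$, the bounded-degree kernel argument, and the final assembly of splittings are all sound.

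There is, however, a genuine error in your inner induction, i.e.\ the case $S = R$ where you peel $X_m$ off $R = R'[X_m]$ and filter by $R'$-submodules. Your claim that ``$M$ and $N$ \ldots both lie in some $W_{d_0}$'' fails for $M$ in this case: $M$ is an $R$-submodule, and multiplication by $X_m$ raises filtration degree, so a finitely generated $R$-submodule need not lie in any $W_{d_0}$. Concretely, take $W = R = S = A[X]$, $M = XA[X]$, $N = 0$: then $M$ is generated by a single element over $R$ but is contained in no $W_d = A + AX + \cdots + AX^d$. Once $M \not\subseteq W_{d_0}$, the successive quotients of the induced filtration of $V$ are no longer the graded pieces $G_d$ but quotients of them by contributions from $M$ in every degree, and your stabilization argument does not apply as written. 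The repair is short and standard: when $S = R$, first replace $W$ by $W/M$ and $N$ by its image (legitimate, since $W/M$ is still a finitely generated $S$-module and $V \cong (W/M)/\mathrm{im}(N)$), reducing to $M = 0$; the remaining submodule $N$ is finitely generated over $A \subseteq R'$, so it does lie in some $W_{d_0}$ and your argument goes through. Equivalently, after absorbing $M$ you can rerun your outer induction on the triple $(A, A, R)$ with $N$ playing the role of the middle-ring submodule --- which is precisely why the lemma is stated with both an $R$-submodule and an $A$-submodule. With this one correction your proof is complete and is essentially the argument behind \cite[2.1.4]{HH3}.
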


\noindent
Write $G \simeq \sk[X_1,\ldots,X_m]/(F_1,\ldots,F_n)$ where $X_i \mapsto x_i$ for $i = 1,\ldots,d \leq m$. Write $A = \Z[\text{coefficients of the }F_j\text{'s}]$.

Let $G_A = A[X_1,\ldots,X_m]/(F_1,\ldots,F_n)$. By the lemma of Generic Freeness, after inverting an element $a$ of $A$, and replacing $A_a$ by $A$, we may assume that $G_A$ is a free $A$-module. 

Since $G_A$ is a free $A$-module, the inclusion $A \into \sk$ induces the injective map $G_A \into G_{\sk} := G_A \otimes_A \sk$.\footnote{We only need that $G_A$ is $A$-flat.} Further, we see that $G \simeq \sk[\underline{X}] \otimes_{A[\underline{X}]}A[\underline{X}]/(\underline{F}) \simeq (\sk \otimes_A A[\underline{X}]) \otimes_{A[\underline{X}]}A[\underline{X}]/(\underline{F}) \simeq \sk \otimes_{A}A[\underline{X}]/(\underline{F})$.

By further inverting another element of $A$ if necessary (and calling the localization $A$ again), we see by \cite{HH3}, 2.1.14(a)-(c),(g) that for each $k = 1,\ldots,d$, $(x_1,\ldots,x_{k-1}):_{G_A} x_k \inc (x_1,\ldots,x_{k-1}) + (G_A)_{\geq n}.$

Let $\m_A$ be any maximal ideal in $A$. Then there is some prime $p  \in \m_A$. Thus if $G' = G_A/\m_AG_A$, we see that $G'$ is a standard graded $\sk'$-algebra, where $\sk'$ is a field of characteristic $p > 0$. We say that {\it $G$ descends to $G'$} or that {\it $G'$ descends from $G$}.

Let $x_i'$ denote the image of $x_i$ in $G'$. Notice that each $x_i'$ is a linear form in $G'$. Now, by Theorem 2.3.5(c) in \cite{HH3}, we see that $\dim(G) = \dim(G')$, hence $x_1',\ldots, x_d'$ form a linear system of parameters in $G'$. The condition that $(x'_1,\ldots,x'_{k-1}):_{G'} x'_k \inc (x'_1,\ldots,x'_{k-1}) + (G')_{\geq n}$ holds for each $k = 1,\ldots,d$ for all but finitely many maximal ideals $\m_A \in A$ by Theorem 2.3.5(g) in \cite{HH3}. Choose an $\m_A$ such that $(x'_1,\ldots,x'_{k-1}):_{G'} x'_k \inc (x'_1,\ldots,x'_{k-1}) + (G')_{\geq n}$ holds for each $k = 1,\ldots,d$. 

Suppose further that $G$ is an absolute domain. By Theorem 2.3.6(c) in \cite{HH3}, we see that for all but finitely many maximal ideals $\m_A$ in $A$, $G' = G_A/\m_A$ is an absolute domain. Choosing one such $\m_A$ for which the condition $(x'_1,\ldots,x'_{k-1}):_{G'} x'_k \inc (x'_1,\ldots,x'_{k-1}) + (G')_{\geq n}$ also holds for each $k = 1,\ldots,d$ we see that:

\begin{theorem}\label{preduction}
Let the notation be as in Setup \ref{S6.1}. Suppose $G$ is an absolute domain. Then there is a field $\sk'$ of prime characteristic, an absolute domain $G'$ which is a standard graded $\sk'$-algebra, $x'_1,\ldots,x'_d$, a linear system of parameters in $G'$ satisfying $(x'_1,\ldots,x'_{k-1}):_{G'} x'_k \inc (x'_1,\ldots,x'_{k-1}) + (G')_{\geq n}$ for each $k = 1,\ldots,d$ such that $G$ descends to $G'$.
\end{theorem}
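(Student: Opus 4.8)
The plan is to execute the reduction-to-prime-characteristic descent that is set up in the discussion preceding the theorem, assembling the persistence results of \cite{HH3} into one compatible choice of a maximal ideal of $A$. I would begin from the finite presentation $G \simeq \sk[X_1,\ldots,X_m]/(F_1,\ldots,F_n)$ with $X_i \mapsto x_i$ and form the finitely generated $\Z$-subalgebra $A = \Z[\text{coefficients of the } F_j]$ of $\sk$, together with $G_A = A[\underline X]/(\underline F)$. By Generic Freeness, after inverting one nonzero element of $A$ I may take $G_A$ to be $A$-free, hence $A$-flat; flatness is exactly what makes $G_A \into G_{\sk} \simeq G$ injective and lets the formation of the relevant colon ideals commute with base change.

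Next I would push the colon hypothesis of Setup \ref{S6.1} down from $G$ to $G_A$ using \cite[2.1.14(a)--(c),(g)]{HH3}, inverting finitely many further elements of $A$ (and renaming the localization $A$), so that $(x_1,\ldots,x_{k-1}):_{G_A} x_k \inc (x_1,\ldots,x_{k-1}) + (G_A)_{\geq n}$ holds for every $k$. The substantive step is then specialization to a closed fiber: for a maximal ideal $\m_A \subset A$ the residue field $\sk' = A/\m_A$ is finite, of some characteristic $p > 0$, and $G' = G_A/\m_A G_A$ is a standard graded $\sk'$-algebra to which $G$ descends, with the images $x_i'$ again linear forms.

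To conclude I would invoke three statements, each valid for all but finitely many $\m_A$: \cite[2.3.5(c)]{HH3} to keep $\dim(G') = \dim(G)$, so that $x_1',\ldots,x_d'$ remain a linear system of parameters; \cite[2.3.5(g)]{HH3} to preserve the colon inclusion in $G'$; and \cite[2.3.6(c)]{HH3} to keep $G'$ an absolute domain, using the hypothesis that $G$ is one. A single $\m_A$ can be chosen to satisfy all three at once because each excludes only finitely many maximal ideals, so their union is still finite.

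The main, and really the only, delicate point is guaranteeing that this finite set of exceptions does not exhaust the maximal spectrum. Here I would use that $A$ is a finitely generated $\Z$-algebra which is a domain of characteristic zero: such a ring is not a field, since a finitely generated $\Z$-algebra that is a field is finite and hence of positive characteristic, so it is a Jacobson domain of dimension at least one and therefore has infinitely many maximal ideals, all with finite residue field. Consequently the complement of the finite bad set is nonempty, and any $\m_A$ in it yields the field $\sk'$, the absolute domain $G'$, and the linear system of parameters $x_1',\ldots,x_d'$ with the required colon inclusion, completing the descent.
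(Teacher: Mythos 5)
Your proposal is correct and follows essentially the same route as the paper: the same presentation $G \simeq \sk[\underline{X}]/(\underline{F})$, the same base ring $A$ generated over $\Z$ by the coefficients, Generic Freeness to make $G_A$ free (hence flat) over $A$, descent of the colon condition to $G_A$ via \cite[2.1.14]{HH3}, and then specialization at a maximal ideal $\m_A$ chosen so that the finitely many exceptional ideals from \cite[2.3.5(c),(g)]{HH3} and \cite[2.3.6(c)]{HH3} are simultaneously avoided. Your final paragraph, verifying that a characteristic-zero finitely generated $\Z$-domain has infinitely many maximal ideals (all with finite residue field), makes explicit a point the paper leaves implicit, and is a welcome addition rather than a deviation.
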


\begin{proposition}\label{T6.3}
Let $G$ be a standard graded algebra over a field $\sk$ with $\char(\sk) = 0$. Let $x_1,\ldots,x_d$ be a linear system of parameters in $G$. If $G$ is an absolute domain, then for $1 \leq k \leq d$, $H_1(x_1,\ldots,x_k;G)_2 = 0$.
\end{proposition}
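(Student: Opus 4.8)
The plan is to reduce to prime characteristic and import the vanishing already established in Lemma \ref{T6.1}. Since $x_1,\dots,x_d$ is a linear system of parameters, the $x_i$ are $\sk$-linearly independent in $G_1$, so $H_1(x_1,\dots,x_k;G)_{\le 1}=0$ automatically; hence, via Proposition \ref{P5.1} taken with $n=2$, the assertion ``$H_1(x_1,\dots,x_k;G)_2=0$ for $1\le k\le d$'' is equivalent to the colon condition $(x_1,\dots,x_{k-1}):_G x_k \inc (x_1,\dots,x_{k-1})+G_{\ge 2}$ for each $k$, which is exactly the hypothesis of Setup \ref{S6.1} with $n=2$. This both explains why the absolute-domain hypothesis is the right input and lets me feed the problem into the reduction-to-$p$ machinery developed in the passage leading to Theorem \ref{preduction}; equivalently, I will simply descend the finite graded homology module directly.

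The key observation is that, for fixed $k$, the graded piece $H_1(x_1,\dots,x_k;G)_2$ is the homology of the degree-$2$ strand
$$G_0^{\oplus\binom{k}{2}} \xrightarrow{\,d_2\,} G_1^{\oplus k} \xrightarrow{\,d_1\,} G_2$$
of the Koszul complex, a complex of finite-dimensional $\sk$-vector spaces whose boundary maps are given by the structure constants of $G$. Spreading $G$ out over $A=\Z[\text{coefficients of the }F_j]$ as in the discussion preceding Theorem \ref{preduction} and applying generic freeness (\cite{HH3}, 2.1.14), I can, after inverting a single element of $A$, arrange that each $(G_A)_i$ and the homology module $H:=H_1(x_1,\dots,x_k;G_A)_2$ are free $A$-modules and that the formation of this homology commutes with base change. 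Then $H\otimes_A\sk \cong H_1(x_1,\dots,x_k;G)_2$ recovers the characteristic-zero object, while for a maximal ideal $\m_A\inc A$ with residue field $\sk'$ of characteristic $p$ one has $H\otimes_A\sk' \cong H_1(x'_1,\dots,x'_k;G')_2$, where $G'=G_A/\m_AG_A$. Because $H$ is $A$-free, its rank equals both $\dim_\sk H_1(x_1,\dots,x_k;G)_2$ and $\dim_{\sk'}H_1(x'_1,\dots,x'_k;G')_2$, so it suffices to force the latter to vanish for one good choice of $\m_A$.

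To produce such an $\m_A$, I invoke \cite{HH3}, 2.3.6(c): since $G$ is an absolute domain, $G'$ is an absolute domain — in particular a domain, hence reduced and connected in codimension one — for all but finitely many $\m_A$, and $x'_1,\dots,x'_d$ remain a linear system of parameters in $G'$ by 2.3.5(c). Choosing $\m_A$ outside the finitely many bad loci (those where generic freeness fails or $G'$ is not an absolute domain), Lemma \ref{T6.1} applies to $G'$ and yields $H_1(x'_1,\dots,x'_k;G')_2=0$. Hence $\rank_A H=0$, so $H=0$ and $H_1(x_1,\dots,x_k;G)_2=0$; running this for each $k$ finishes the proof. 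I expect the middle step to be the main obstacle: one must be certain that passing to the special fibre commutes with taking the degree-$2$ Koszul homology, i.e. that after inverting a suitable element of $A$ the homology is genuinely $A$-free rather than merely of constant generic rank. This is precisely what the cited generic-freeness results of \cite{HH3} guarantee, and it is where the choice $n=2$ and the finiteness of the relevant graded pieces are used; everything else is bookkeeping already assembled in the passage preceding Theorem \ref{preduction}.
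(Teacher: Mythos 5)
Your proof is correct, and it shares the paper's two load-bearing ingredients: spreading $G$ out over a finitely generated $\Z$-subalgebra $A \inc \sk$ and passing to closed fibres $G' = G_A/\m_A G_A$, with \cite{HH3} 2.3.6(c) making almost all fibres absolute domains (hence domains, hence reduced and connected in codimension one) and 2.3.5(c) keeping $x'_1,\ldots,x'_d$ a linear system of parameters, followed by an appeal to Lemma \ref{T6.1} in characteristic $p$. Where you genuinely differ is in \emph{what} gets descended. The paper translates the desired vanishing into the colon conditions of Setup \ref{S6.1} with $n=2$ (via Proposition \ref{P5.1}) and argues by contradiction, descending the \emph{failure} of a colon containment to the fibres; note that Theorem \ref{preduction} as stated descends the containments when they hold, so the paper's contrapositive use of it implicitly relies on the biconditional form of the descent results (\cite{HH3}, 2.3.5(g)). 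You instead descend the finite-dimensional object itself: the degree-two strand of the Koszul complex over $A$, arranged by generic freeness to have free homology $H$ whose formation commutes with base change, so that $\rank_A H$ equals both $\dim_{\sk} H_1(x_1,\ldots,x_k;G)_2$ and $\dim_{\sk'} H_1(x'_1,\ldots,x'_k;G')_2$, and the latter vanishes by Lemma \ref{T6.1}. This yields a direct (non-contradiction) argument that bypasses Proposition \ref{P5.1} and Theorem \ref{preduction} altogether. The one technical debt is the base-change compatibility of the homology after inverting a single element of $A$: freeness of $H$ alone is not sufficient; one must also arrange, by applying the same generic freeness lemma to the finitely many modules involved, that the images and cokernels of the two boundary maps are $A$-flat. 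You flag exactly this point and attribute it correctly to the Section 2.1 machinery of \cite{HH3}, so it is a matter of spelled-out detail rather than substance. Your opening observation that $H_1(x_1,\ldots,x_k;G)_{\leq 1}=0$ holds automatically for a linear system of parameters (so degree-two vanishing is equivalent to the $n=2$ colon conditions) is also correct and matches what the paper uses implicitly.
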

\begin{proof} Suppose $H_1(x_1,\ldots,x_k; G)_2 \neq 0$ for some $k$, $1 \leq k \leq d$. By Proposition \ref{P5.1} and Theorem \ref{preduction}, there is a field $\sk'$ of some prime characteristic $p > 0$, a standard graded $\sk'$-algebra $G'$ which is an absolute domain with a system of parameters $x_1,\ldots,x_d$ such that $H_1(x_1,\ldots,x_k; G')_2 \neq 0$. This contradicts Lemma \ref{T6.1}.
\end{proof}

As a consequence of Proposition \ref{T6.3}, Proposition \ref{P5.2} and Corollary \ref{C5.1}, we conclude that:

\begin{theorem}\label{C6.3}
If $(R,\m,\sk)$ is a $d$-dimensional Cohen-Macaulay local ring with associated graded ring $\G = \gr_R(\m) = R/\m \oplus \m/\m^2 \oplus \cdots$, then $\m$ is three-standard when $R$ is equicharacteristic zero and $\G$ is an absolute domain.
\end{theorem}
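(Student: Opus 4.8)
The plan is to mirror the structure of the proof of Theorem~\ref{C6.1}, replacing the prime-characteristic input (Lemma~\ref{T6.1}) by its characteristic-zero counterpart (Proposition~\ref{T6.3}). First I would fix a minimal reduction $J = (x_1,\ldots,x_d)$ of $\m$ and write $x'_1,\ldots,x'_d \in \G_1$ for the leading forms of the $x_i$ in $\G = \gr_\m(R)$. By Proposition~\ref{P5.2}, three-standardness of $\m$ with respect to $J$ is equivalent to the vanishing $H_1(x'_1,\ldots,x'_k;\G)_j = 0$ for all $j < 3$ and all $1 \le k \le d$. My first step is therefore to dispose of the low-degree cases: the degree $j=0$ vanishing holds automatically by degree reasons (Remark~\ref{R5.2}(4)), and the degree $j=1$ vanishing $H_1(x'_1,\ldots,x'_k;\G)_1 = 0$ follows from Corollary~\ref{C5.1} applied with $I = \m$, since the maximal ideal is always integrally closed.

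It then remains to establish the degree-two vanishing $H_1(x'_1,\ldots,x'_k;\G)_2 = 0$ for $1 \le k \le d$, and this is exactly where I would invoke Proposition~\ref{T6.3}. To apply it I must verify its hypotheses for $G = \G$: that $\G$ is a standard graded algebra over a field of characteristic zero, and that $x'_1,\ldots,x'_d$ form a linear system of parameters in $\G$. The first holds because $\G_0 = R/\m = \sk$ has characteristic zero (as $R$ is equicharacteristic zero) and $\G$ is generated over $\sk$ by $\G_1 = \m/\m^2$; the second holds because $J$ is a minimal reduction of $\m$, so each $x_i$ lies in $\m \setminus \m^2$ and the leading forms $x'_i$ are linear forms that cut $\G$ down to dimension zero. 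With these checked and with $\G$ an absolute domain by hypothesis, Proposition~\ref{T6.3} delivers the required degree-two vanishing.

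Combining the three degrees gives $H_1(x'_1,\ldots,x'_k;\G)_j = 0$ for all $j < 3$, so by Proposition~\ref{P5.2} the $\m$-adic filtration is three-standard with respect to $J$; as $J$ was an arbitrary minimal reduction, $\m$ is three-standard. I expect no genuine obstacle inside this argument, since the substantive work has already been absorbed into Proposition~\ref{T6.3}, which itself reduces to prime characteristic and appeals to Lemma~\ref{T6.1}. The only points demanding care are the bookkeeping verifications that $x'_1,\ldots,x'_d$ is a linear system of parameters in $\G$ and that the field over which $\G$ is standard graded has characteristic zero; both are immediate from the Cohen-Macaulay and equicharacteristic-zero hypotheses together with the definition of a minimal reduction.
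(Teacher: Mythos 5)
Your proposal is correct and follows exactly the paper's own route: the paper proves Theorem~\ref{C6.3} precisely as ``a consequence of Proposition~\ref{T6.3}, Proposition~\ref{P5.2} and Corollary~\ref{C5.1},'' mirroring the proof of Theorem~\ref{C6.1} in prime characteristic. Your additional bookkeeping (the degree-zero vanishing via Remark~\ref{R5.2}(4), and the verification that the leading forms $x'_1,\ldots,x'_d$ give a linear system of parameters in $\G$) just makes explicit what the paper leaves implicit.
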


Our last theorem gives the generalization of Puthenpurakal's theorem in equicharacteristic $0$.

\begin{theorem}\label{invariance0}
Let $(R,\m,\sk)$ be a Cohen-Macaulay local ring of dimension $d \geq 1$ with infinite residue field $\sk$. Assume that
$R$ is equicharacteristic zero and
$\G$ is an absolute domain. If $J$ is a minimal reduction of $\m$, then
$$\lm(\m^4/J\m^3) =   e_0(R) + \sum_{i=0}^3(-1)^{i+1}\binom{d-1}{i}\lm(\m^{3-i}/\m^{4-i}).$$
Consequently, $\lm(\m^4/J\m^3)$ is independent of the reduction $J$.
\end{theorem}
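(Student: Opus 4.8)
The plan is to deduce this theorem directly from two results already established: the three-standardness of $\m$ proved in Theorem~\ref{C6.3} (the equicharacteristic-zero case, obtained by reduction to prime characteristic), and the length computation for three-standard maximal ideals recorded in Remark~\ref{invremark}. In other words, once three-standardness is known, the asserted formula is no longer a geometric or homological statement but a purely combinatorial consequence of the general length formula in Theorem~\ref{T5.1}. The proof is thus a matter of assembling these pieces and verifying that the indexing matches.

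First I would apply Theorem~\ref{C6.3}. Its hypotheses coincide with those here: $R$ is Cohen-Macaulay, equicharacteristic zero, and $\G=\gr_\m(R)$ is an absolute domain. Hence $\m$ is three-standard; concretely, for every minimal reduction $J$ of $\m$ one has $J\cap\m^2=J\m$ (automatic for any minimal reduction of the maximal ideal) and $J\cap\m^3=J\m^2$, so the $\m$-adic filtration $\mathfrak F=\{\m^n\}$ is $3$-standard with respect to $J$. Next I would invoke Theorem~\ref{T5.1} (packaged as Remark~\ref{invremark}) with $I=\m$, filtration $\mathfrak F=\{\m^n\}$, and $n=3$. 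Taking $k=n=3$ in formula $(\sharp)$ yields $\lm(\m^4/J\m^3)=e_0(\m)+\sum_{i=0}^3(-1)^{i+1}\binom{d-1}{i}\lm_{\mathfrak F}(3-i)$. Since $e_0(\m)=e_0(R)$ by definition and $\lm_{\mathfrak F}(3-i)=\lm(\m^{3-i}/\m^{4-i})$, this is exactly the claimed identity. Because the right-hand side involves only the multiplicity of $R$ and the Hilbert function values $\lm(\m^j/\m^{j+1})$, none of which depend on $J$, the independence of $\lm(\m^4/J\m^3)$ from the chosen minimal reduction is immediate.

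The substantive content is entirely contained in Theorem~\ref{C6.3}, whose proof produces the vanishing $H_1(x'_1,\dots,x'_k;\G)_2=0$ via Theorem~\ref{preduction} and Proposition~\ref{T6.3}, descending to the positive-characteristic Lemma~\ref{T6.1}; since we are entitled to assume that result, no real obstacle remains. The only point demanding care is the bookkeeping: confirming that three-standardness of $\m$ reduces to the single nontrivial equality $J\cap\m^3=J\m^2$, and that the specialization $k=n=3$ in $(\sharp)$ reproduces the stated alternating sum with the correct binomial coefficients via Pascal's identity. This mirrors the positive-characteristic case exactly, where the analogous Theorem~\ref{invariancep} follows as a one-line combination of Theorem~\ref{C6.1} and Remark~\ref{invremark}.
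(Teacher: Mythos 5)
Your proposal is correct and follows exactly the paper's own argument: the paper proves Theorem~\ref{invariance0} by combining Theorem~\ref{C6.3} (three-standardness of $\m$ in equicharacteristic zero when $\G$ is an absolute domain) with Remark~\ref{invremark} (the specialization of Theorem~\ref{T5.1} to $I=\m$, $n=3$). Your additional bookkeeping about $J\cap\m^2=J\m$ being automatic and the indexing in formula $(\sharp)$ is accurate but simply spells out what the paper leaves implicit.
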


\begin{proof} This is immediate from Theorem~\ref{C6.3} together with Remark~\ref{invremark}. \end{proof}

We conclude with the following question:

\begin{question}
Let $(R,\m,\sk)$ be a Cohen-Macaulay local ring of dimension $d \geq 1$. When is an $\m$-primary integrally closed ideal three-standard?
\end{question}

\bibliographystyle{amsplain}

\end{document}